\documentclass[11pt,letterpaper]{amsart}
\usepackage[usenames,dvipsnames]{color}
\usepackage{amsthm,amsfonts,amssymb,amsmath,amsxtra, appendix}
\usepackage[all]{xy}
\SelectTips{cm}{}
\usepackage{xr-hyper}
\usepackage{verbatim}
\usepackage[margin=1.25in]{geometry}
\usepackage{mathrsfs}
\usepackage{diagbox}
\usepackage{enumerate}
\usepackage[shortlabels]{enumitem}
\usepackage{tikz}
\usepackage{tkz-euclide}
\usepackage{setspace}
\usepackage{hyperref}
\usepackage{csquotes}
\usepackage{leftidx}

\RequirePackage{xspace}
\RequirePackage{etoolbox}
\RequirePackage{varwidth}
\RequirePackage{enumitem}
\RequirePackage{tensor}
\RequirePackage{mathtools}
\RequirePackage{longtable}
\RequirePackage{multirow}

\usepackage{tikz}
\usepackage{tikz-cd}
\usepackage{csquotes}

\usepackage[url=false,doi=false,isbn=false,sorting=nyt,giveninits=true,maxbibnames=4]{biblatex}
\def\ge{\geqslant}
\def\le{\leqslant}
\def\a{\alpha}
\def\b{\beta}
\def\g{\gamma}

\def\D{\Delta}
\def\L{\Lambda}

\def\s{\sigma}
\def\t{\tau}
\def\th{\theta}
\def\k{\kappa}
\def\l{\lambda}

\def\i{^{-1}}

\def\aff{\mathrm{aff}}

\def\<{\langle}
\def\>{\rangle}

\newcommand{{\BG}}{\ensuremath{\mathbb {G}}\xspace}

\newcommand{{\BK}}{\ensuremath{\mathbb {K}}\xspace}

\newcommand{\BR}{\ensuremath{\mathbb {R}}\xspace}
\newcommand{\BS}{\ensuremath{\mathbb {S}}\xspace}

\newcommand{\BZ}{\ensuremath{\mathbb {Z}}\xspace}

\newcommand{\CF}{\ensuremath{\mathcal {F}}\xspace}

\newcommand{\CP}{\ensuremath{\mathcal {P}}\xspace}

\DeclareMathOperator{\Adm}{Adm}

\def\tW{\tilde W}
\def\ta{\tilde \a}
\def\tb{\tilde \b}
\def\tg{\tilde \g}

\def\tS{\tilde \BS}

\DeclareMathOperator{\supp}{supp}

\newtheorem{theorem}{Theorem}

\newtheorem{proposition}[theorem]{Proposition}
\newtheorem{lemma}[theorem]{Lemma}

\newtheorem{corollary}[theorem]{Corollary}

\theoremstyle{definition}
\newtheorem{definition}[theorem]{Definition}

\newtheorem*{example*}{Example}

\newtheorem{remark}[theorem]{Remark}

\newtheorem*{function*}{Function}

\numberwithin{equation}{section}
\numberwithin{theorem}{section}

\setitemize[0]{leftmargin=*,itemsep=\the\smallskipamount}
\setenumerate[0]{leftmargin=*,itemsep=\the\smallskipamount}

\renewcommand{\to}{%
   \ifbool{@display}{\longrightarrow}{\rightarrow}%
   }
\let\shortmapsto\mapsto
\renewcommand{\mapsto}{%
   \ifbool{@display}{\longmapsto}{\shortmapsto}%
   }
\newlength{\olen}
\newlength{\ulen}
\newlength{\xlen}
\newcommand{\xra}[2][]{%
   \ifbool{@display}%
      {\settowidth{\olen}{$\overset{#2}{\longrightarrow}$}%
       \settowidth{\ulen}{$\underset{#1}{\longrightarrow}$}%
       \settowidth{\xlen}{$\xrightarrow[#1]{#2}$}%
       \ifdimgreater{\olen}{\xlen}%
          {\underset{#1}{\overset{#2}{\longrightarrow}}}%
          {\ifdimgreater{\ulen}{\xlen}%
             {\underset{#1}{\overset{#2}{\longrightarrow}}}
             {\xrightarrow[#1]{#2}}}}%
      {\xrightarrow[#1]{#2}}
   }
\makeatother
\newcommand{\xyra}[2][]{%
   \settowidth{\xlen}{$\xrightarrow[#1]{#2}$}%
   \ifbool{@display}%
      {\settowidth{\olen}{$\overset{#2}{\longrightarrow}$}%
       \settowidth{\ulen}{$\underset{#1}{\longrightarrow}$}%
       \ifdimgreater{\olen}{\xlen}%
          {\mathrel{\xymatrix@M=.12ex@C=3.2ex{\ar[r]^-{#2}_-{#1} &}}}%
          {\ifdimgreater{\ulen}{\xlen}%
             {\mathrel{\xymatrix@M=.12ex@C=3.2ex{\ar[r]^-{#2}_-{#1} &}}}
             {\mathrel{\xymatrix@M=.12ex@C=\the\xlen{\ar[r]^-{#2}_-{#1} &}}}}}%
      {\mathrel{\xymatrix@M=.12ex@C=\the\xlen{\ar[r]^-{#2}_-{#1} &}}}%
   }
\makeatletter
\newcommand{\xla}[2][]{%
   \ifbool{@display}%
      {\settowidth{\olen}{$\overset{#2}{\longleftarrow}$}%
       \settowidth{\ulen}{$\underset{#1}{\longleftarrow}$}%
       \settowidth{\xlen}{$\xleftarrow[#1]{#2}$}%
       \ifdimgreater{\olen}{\xlen}%
          {\underset{#1}{\overset{#2}{\longleftarrow}}}%
          {\ifdimgreater{\ulen}{\xlen}%
             {\underset{#1}{\overset{#2}{\longleftarrow}}}
             {\xleftarrow[#1]{#2}}}}%
      {\xleftarrow[#1]{#2}}
   }
\newcommand{\isoarrow}{%
   \ifbool{@display}{\overset{\sim}{\longrightarrow}}{\xrightarrow\sim}%
   }

\begin{document}

\title[]{On the Face Map of the Admissible Set With Iwahori Level} 

\author[Qingchao Yu]{Qingchao Yu}
\address{Institute for Advanced Study, Shenzhen University, Nanshan District, Shenzhen, Guangdong, China}
\email{qingchao\_yu@outlook.com}

\thanks{}
%\keywords{Admissible set, Face Map}
%\subjclass[2020]{11G25, 20G25}

%\date{\today}

\begin{abstract}
To each face $\CF$ of the coweight polytope $\CP_{\mu}$, we associate a subset $\Adm(\mu)_{\CF}$ of the $\mu$-admissible set $\Adm(\mu)$, which we refer to as a face of $\Adm(\mu)$. This gives rise to a face decomposition of $\Adm(\mu)$. As an application, we give a complete description of the fibers of the face map $|\D|^f$ defined by Pappas-Rapoport \cite{PR26} and prove that the face map is surjective.
\end{abstract}

\maketitle

%\tableofcontents

\section*{Introduction}
Let $(\Phi, X^*,\Phi^{\vee},X_* ,\D_0)$ be a reduced based root datum. Let $W_0$ be the Weyl group. Let $\tW = X_* \rtimes W_0 = \{t^{\l}z \mid \l \in X_*, z \in W_0\}$ be the extended affine Weyl group. Let $\mu$ be a (not necessarily minuscule) dominant coweight. The coweight polytope $\CP_{\mu}$ is defined as the convex hull of $W_0(\mu)$, the $W_0$-orbit of $\mu$. Let $\mathscr{F}(\CP_{\mu})$ be the poset of faces of $\CP_{\mu}$, where $\CF \le \CF'$ if $\CF \subseteq \CF'$. 

Define the $\mu$-admissible set as
\begin{align*}
    \Adm(\mu) = \{w \in \tW \mid w \le t^{\mu'}\text{ for some } \mu' \in W_0(\mu)\}.
\end{align*}

The notion of the admissible set arose from the study of local models of Shimura varieties and moduli of $G$-Shtukas. We refer to \cite{PRS} and the references there for the early history of the subject, and \cite{PZ13}, \cite{SW20}, \cite{HR23}, and \cite{AGLR} for more recent developments. The local models stratify into affine Schubert varieties parametrized by the admissible sets (see \cite[Theorem 9.3]{PZ13}). The admissible sets also serve as the support of the Bernstein functions in the center of the affine Hecke algebras (see \cite[Corollary 5.7]{Ha01}).

The combinatorics of $\Adm(\mu)$ has been studied intensively. Haines-He \cite{HH17} prove that $\Adm(\mu)$ is equal to the intersection of combinatorial obtuse cones, which indicates that $\Adm(\mu)$ should be regarded as the alcove-theoretic analogue of the coweight polytope $\CP_{\mu}$. Mili\'cevi\'c-Viehmann \cite{MV20}, He-Yu \cite{HY}, Schremmer \cite{Sch24} use the quantum Bruhat graph to study $\Adm(\mu)$ and give a nice description of $\Adm(\mu)$.

Very recently, in \cite{PR26}, Pappas-Rapoport defined a morphism called the divisor map $\D$ on the quotient stack of the local model, which induces a map 
\begin{align*}
   |\D|: \Adm(\mu) \longrightarrow \mathscr{F}(\CP_{\mu}).
\end{align*}
They prove that (i) $|\D|$ is order-reversing, (ii) $|\D|$ sends the unique minimal element $\t_{\mu}$ in $\Adm(\mu)$ to the maximal face $\CP_{\mu}$, and (iii) $|\D|$ sends the maximal element $t^{\mu'}$ to the minimal face $\{\mu'\}$ of $\CP_{\mu}$.    

Pappas-Rapoport defined the \emph{face map} $|\D|^f$ as follows. For any $w \in \Adm(\mu)$, define $\L(w) = \{\mu' \in W_0(\mu) \mid  w\le t^{\mu'}\}$ and define $|\D|^f(w)$ as the smallest face of $\CP_{\mu}$ containing $\L(w)$. Then it is clear that $|\D|(w)\supset |\D|^f(w)$ for any $w \in \Adm(\mu)$. They conjectured that the face map $|\D|^f$ is surjective and that the map $|\D|$ coincides with the map $|\D|^f$ (cf. \cite[Conjecture 4.1.6, Remark 4.1.7]{PR26}).

The goal of this paper is to give an explicit description of the fibers of the face map $|\D|^f$ and prove that $|\D|^f$ is surjective.

For each face $\CF$ of $\CP_{\mu}$, we define the face $\Adm(\mu)_{\CF}$ of $\Adm(\mu)$ as the admissible set in a certain Levi subgroup. We refer to \S\ref{sec:adm} and \S\ref{sec:main} for the exact definition. Moreover, we define the notions of the \emph{interior}, the \emph{boundary}, and the \emph{center} of the face $\Adm(\mu)_{\CF}$ (see \S\ref{sec:main}). These notions play important roles in analyzing the face structure of $\Adm(\mu)$ and the face map.

The main results of this paper are the following.
\begin{theorem}\label{thm:01}
The following holds true.
\begin{enumerate}
\item Let $\CF$ be a face of the polytope $\CP_{\mu}$. We have
\begin{align*}
    \Adm(\mu)_{\CF} = \{w \in \Adm(\mu) \mid \L(w) \subset \CF\}. 
\end{align*}

\item For any face $\CF$ of $\CP_{\mu}$, the fiber $(|\D|^f)^{-1}(\CF)$ is equal to $\Adm(\mu)_{\CF}^{\circ}$, the interior of the face $\Adm(\mu)_{\CF}$, which is defined as the complement of the union of all smaller faces $\Adm(\mu)_{\CF'}$ in $\Adm(\mu)_{\CF}$.

\item There is a decomposition
\begin{align*}
    \Adm(\mu) = \bigsqcup_{\CF} \Adm(\mu)_{\CF}^{\circ}.
\end{align*}
Here, $\CF$ runs over all faces of $\CP_{\mu}$.

\item The face map $|\D|^{f}$ is surjective.
\end{enumerate}
\end{theorem}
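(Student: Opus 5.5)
The plan is to prove (1) first; (2)–(4) then follow formally. Faces of $\CP_{\mu}$ are parametrized up to $W_0$ by pairs $(J,\l)$: a face $\CF$ is the $W_J$-orbit convex hull of some $\mu_{\CF}\in W_0(\mu)$, whose vertices are exactly $\CF\cap W_0(\mu)=W_J(\mu_{\CF})$. Accordingly, I take $\Adm(\mu)_{\CF}$ to be the admissible set $\Adm^{M}(\mu_{\CF})$ of the Levi $M=M_J$ (conjugated by the appropriate $W_0$-element), viewed inside $\tW$ via $\tW_M\subset\tW$.

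For (1), the inclusion $\subseteq$ has two parts. First, the Bruhat order of $\tW_M$ is compatible with that of $\tW$, so $\Adm^M(\mu_{\CF})\subset\Adm(\mu)$. Second, I need $\L(w)\subset\CF$ for every $w$ in the face. Here I use the translation part: if $w\le t^{\mu'}$, then the image of $w$ in $\pi_1$-type invariants, together with the Haines–He obtuse cone description \cite{HH17}, forces $\mu'$ to lie on the same supporting hyperplanes as the vertices of $\CF$. Concretely, choose a linear functional $\phi$ that is maximized on $\CP_{\mu}$ exactly along $\CF$. Then $\phi$ is constant on the translation parts of alcoves in $\tW_M$ (the $M$-Kottwitz invariant), and the inequality $w\le t^{\mu'}$ implies $\phi(\mu')\ge \phi(\mu_{\CF})$, with equality iff $\mu'\in\CF$.

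For $\supseteq$, suppose $w\le t^{\mu'}$ for every $\mu'$ in $\L(w)\subset\CF$. I would show that $w\in\tW_M$ and that $w\le_M t^{\mu'}$ for some such $\mu'$. The key tool is the quantum Bruhat graph / He–Yu description of $\Adm(\mu)$: an element $w=t^{\l}y$ lies below $t^{\mu'}$ with respect to a chosen chamber. I expect this to be the main obstacle: showing that if all the dominating translations lie in $\CF$, then $w$ already lies in the parabolic piece. My first attempt would use a reduction à la Haines–He: by minimality, pick the Levi $M'$ generated by the walls that $w$ does not cross toward the $\phi$-direction, and show that if $w\notin \tW_M$, one can produce $\mu''\notin\CF$ with $w\le t^{\mu''}$, by reflecting $\mu'$ across a root not in $J$ that $w$ "ignores". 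Compatibility of the Bruhat orders ($w\le t^{\mu'}$ in $\tW$ with both in $\tW_M$ iff the same in $\tW_M$) should then follow from the standard fact that Levi intervals are Bruhat-convex after translating to a suitable position.

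Given (1), part (2) is formal. We have $|\D|^f(w)=\CF$ iff $\L(w)\subset \CF$ and $\L(w)\not\subset\CF'$ for all $\CF'\subsetneq\CF$, since the smallest face containing a set is the intersection of the faces containing it. By (1) this says exactly that $w\in\Adm(\mu)_{\CF}$ and $w\notin\Adm(\mu)_{\CF'}$, i.e. $w\in\Adm(\mu)_{\CF}^\circ$. Part (3) follows because $\L(w)\neq\emptyset$, so each $w$ has a unique value $|\D|^f(w)$. For (4), I must show each interior is nonempty. I claim the center, namely the minimal element $\t^M_{\mu_{\CF}}$ of $\Adm^M(\mu_{\CF})$, lies in the interior. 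It lies below $t^{\mu'}$ for every $\mu'\in W_J(\mu_{\CF})=\CF\cap W_0(\mu)$, by the Levi version of Pappas–Rapoport's (ii) (the minimal element is below every extreme translation). Hence $\L(\t^M_{\mu_{\CF}})$ contains all vertices of $\CF$, and the smallest face containing them is $\CF$.
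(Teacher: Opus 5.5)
Your reduction of (2)--(4) to (1) is correct and is the paper's own argument; in particular, surjectivity comes from the center $c_{\CF}$, which lies below $t^{\mu'}$ for every vertex $\mu'$ of $\CF$. The gap is that (1), which carries all the content, is not actually proved.

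\textbf{The inclusion ``$\subseteq$''.} Constancy of $\phi$ on $W_{M,\aff}t^{\mu_{\CF}}$ is beside the point. We have $t^{\mu'}\in\tW_M$ for every $\mu'$, and $w\le t^{\mu'}$ only puts $w$ and $t^{\mu'}$ in the same $W_{\aff}$-coset, not the same $W_{M,\aff}$-coset. So there is no Kottwitz-type obstruction forcing $\mu'\in\CF$. Nor is a single functional monotone along Bruhat chains: if $w<s_{\tg}w$ with $\tg=(\g,k)$, then $s_{\tg}w(e)-w(e)=-c\,\g^\vee$ with $c>0$, and $\phi(\g^\vee)$ can have either sign. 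Since $\phi(\mu')\le\phi(\mu_{\CF})$ holds automatically, your inequality $\phi(\mu')\ge\phi(\mu_{\CF})$ is the entire claim, and it is left unjustified.

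The paper instead proves that $\Adm(\mu)_{a,I}$ is upward closed in $\Adm(\mu)$, one step $w<s_{\tg}w$ at a time (Lemma \ref{lem:key}). Write $w(e)=e+a(\mu-\l)$ with $\l\in\BR_{\ge0}\Phi_I^{\vee,+}$. Then $s_{\tg}w(e)-e=s_\g a(\mu-\l+c_1a^{-1}\g^\vee)=a(\mu-\l-c_2a^{-1}\g^\vee)$ with $c_1,c_2>0$. If $\g\notin\Phi_{a,I}$, one of these two expressions violates the Haines--He positivity condition, for $z=a$ or for $z=s_\g a$. In your terms you need two functionals, $\phi$ and $\phi\circ s_\g$. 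The case $\phi(\g^\vee)<0$ is excluded by $\phi$. The case $\phi(\g^\vee)>0$, the direction in which $s_{\tg}w$ would leave the face, is excluded only by the reflected functional.

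\textbf{The inclusion ``$\supseteq$''.} Your step ``reflect $\mu'$ across a root not in $J$ that $w$ ignores'' gives no reason why the new translation still lies above $w$. The Bruhat compatibility you invoke is also not something you can cite. Dyer only gives $\le_M\Rightarrow\le$, and in the paper the converse on the face (Corollary \ref{cor:Bruhat-closed}(2)) is itself deduced from the upward closedness above.

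The missing ingredients are:
\begin{enumerate}
\item[(i)] Lemma \ref{lem:def2}, $\Adm(\mu)_{a,I}=\Adm(\mu)\cap W_{a,I,\aff}t^{a(\mu)}$, which makes membership a coset condition requiring no $\le_M$-comparison;
\item[(ii)] the fact (Haines; He--Schremmer--Yu) that every non-maximal $w\in\Adm(\mu)$ has two distinct elements $s_{\tg_1}w,s_{\tg_2}w>w$ in $\Adm(\mu)$.
\end{enumerate}
With these, downward induction on $\ell(w)$ reduces to a $w\notin\Adm(\mu)_{a,I}$ all of whose successors in $\Adm(\mu)$ lie in the face. By (i), $\g_1,\g_2\notin\Phi_{a,I}$ while $s_{\tg_1}s_{\tg_2}\in W_{a,I,\aff}$. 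Lemma \ref{lem:a1a2} plus a short translation computation then forces $\tg_1=\pm\tg_2$, a contradiction.
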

We refer to Theorem \ref{thm:main1} and Theorem \ref{thm:main2} for the concrete statements. We note that the decomposition in Theorem \ref{thm:01} (2) is closely related to the Hodge-Newton decomposition
\begin{align*}
    \Adm(\mu, b) = \bigsqcup_{P' = M' N' \in \mathscr{P}^{\s}} \Adm^{M}(z_{P'}(\mu),b_{P'})
\end{align*}
in \cite[Theorem 4.11 (1)]{GHN2}. Note also that the interior $\Adm(\mu)^{\circ}$ is closely related to the Hodge--Newton indecomposable Newton strata.

%For the proof, we combine Maxwell's description on faces of $\CP_{\mu}$ \cite{MAX89}, Haines-He's description on $\Adm(\mu)$ \cite{HH17}, and Dyer's results on the Bruhat orders on reflection subgroups of Coxeter groups \cite{Dyer90}.

The paper is organized as follows. In \S\ref{sec:polytope}, we introduce the coweight polytope $\CP_{\mu}$ and recall Maxwell's description of the face structure. In \S\ref{sec:adm}, we introduce the $\mu$-admissible set and define its faces. In \S\ref{sec:main}, we state our two main results and the proof is given in \S\ref{sec:proof}.

{\bf Acknowledgements: } I would like to thank Michael Rapoport for introducing the problem to me and for many helpful discussions and suggestions. I would like to thank Xuhua He for his constant support on my research and for many helpful discussions. I would like to thank Hongsheng Hu, Tao Gui, Felix Schremmer, Sian Nie, and Pengcheng Li for helpful discussions. The author is partially supported by the National Natural Science Foundation of China (grant no. 12501018).

\section{The Polytope $\CP_{\mu}$}\label{sec:polytope}

Let $(\Phi, X^*,\Phi^{\vee},X_* ,\D_0)$ be a reduced based root datum. Let $\<-,-\>$ be the natural pairing between $X_\ast$ and $X^\ast$. Let $W_0$ be the Weyl group. Let $\Phi^+$ and $\Phi^-$ be the sets of positive and negative roots, respectively. Let $V = X_* \otimes_{\BZ} \BR$. Then $W_0$ acts on $V$ naturally. A coweight $\l \in X_*$ is said to be dominant if $\<\l,\a\>\ge0$ for any $\a\in\Phi^+$. For any $I\subset\D_0$, we denote by $W_I$ the subgroup of $W_0$ generated by $\{s_{\a}\mid  \a\in I\}$ and denote by $W^I$ the set of minimal representatives in the left cosets of $W_I$.

In the rest of this paper, we fix a (not necessarily minuscule) dominant coweight $\mu$. Let $J_{\mu} = \{\a\in\D_0 \mid \<\mu,\a\> = 0\}$.

A \emph{polytope} is the convex hull of finitely many points in a Euclidean space. A subset $\CF$ of a polytope $\CP$ is called a \emph{face} of $\CP$ if $\CF = \CP$ or $\CF$ is the intersection of $\CP$ with a supporting hyperplane of $\CP$.

The \emph{coweight polytope} $\CP_{\mu}$ is defined as the convex hull of $W_0(\mu)$ in $V$. Here, $W_0(\mu)$ is the $W_0$-orbit of $\mu$. Let $\mathscr{F}(\CP_{\mu})$ be the poset of faces of $\CP_{\mu}$, where $\CF \le \CF'$ if and only if $\CF \subset \CF'$. %Note that $W_0$ acts on $\mathscr{F}(\CP_{\mu})$ naturally. 

We now recall a description of the face poset $\mathscr{F}(\CP_{\mu})$ given by Maxwell \cite{MAX89}. Relevant results can be found in \cite[Theorem 1]{KhRi12} and \cite[Theorem 2.2]{kh13}.

For $I \subset \D_0$ and $a \in W^I$, we denote by $\CF_{a, I}$ the convex hull of $aW_I(\mu) = \{a u (\mu) \mid u \in W_I\}$. Note that $\CF_{a, I}$ only depends on the set $a W_I W_{J_{\mu}}$. In other words, for $I,I'\subset \D_0$ and $a,a'\in W^I$, if $a W_I W_{J_{\mu}} = a' W_{I'} W_{J_{\mu}}$, then $\CF_{a, I} = \CF_{I', a'}$.

\begin{theorem}[{\cite[Section 5]{MAX89}}]\label{thm:Maxwell}
\begin{enumerate}
\item For $I\subset\D_0$ and $a\in W^I$, the set $\CF_{a, I}$ is a face of $\CP_{\mu}$ whose set of vertices is $a W_I(\mu)$.

\item There is an isomorphism of posets:
\begin{align*}
    \{a W_I W_{J_{\mu}} \mid I\subset\D_0, a\in W^I\} &\xlongrightarrow{\sim} \mathscr{F}(\CP_{\mu})\\
    a W_I W_{J_{\mu}} &\mapsto  \CF_{a, I}.
\end{align*}

\item Let $I,I' \subset\D_0$ and $a\in W^I, a' \in W^{I'}$. If $a W_I W_{J_{\mu}} \cap a' W_{I'} W_{J_{\mu}} =\emptyset$, then $\CF_{a, I} \cap \CF_{I', a'} = \emptyset$. If $a W_I W_{J_{\mu}} \cap a' W_{I'} W_{J_{\mu}} \ne \emptyset$, then $a W_I W_{J_{\mu}} \cap a' W_{I'} W_{J_{\mu}} = a'' W_{I''} W_{J_{\mu}} $ and $\CF_{a, I} \cap \CF_{I', a'} = \CF_{I'', a''}$ for some $I''\in\D_0$ and $a'' \in W^{I''}$.
\end{enumerate}
\end{theorem}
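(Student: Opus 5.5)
Throughout, the tool is a linear functional that is maximized exactly on the face in question. For $I\subset\D_0$, choose $\eta_I\in V^*$ dominant with $\<\a^\vee,\eta_I\>=0$ for $\a\in I$ and $\<\a^\vee,\eta_I\>>0$ for $\a\in\D_0\setminus I$; concretely, take the sum of the fundamental weights indexed by $\D_0\setminus I$.

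\textbf{Step 1: each $\CF_{a,I}$ is exposed.} Set $\ell=a(\eta_I)=\eta_I\circ a^{-1}$. For $\l\in W_0(\mu)$, the point $a^{-1}\l$ lies in $W_0(\mu)$, so $\mu-a^{-1}\l$ is a nonnegative combination of simple coroots. Hence $\ell(\l)\le\eta_I(\mu)$. Equality holds iff this combination involves only coroots indexed by $I$. A standard lemma says that then $a^{-1}\l\in W_I(\mu)$; this follows from the dominance of $\mu$, by comparing the two points inside the Levi $W_I$. So the maximum of $\ell$ on $\CP_\mu$ is attained exactly on $\mathrm{conv}(aW_I(\mu))$. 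Therefore $\CF_{a,I}$ is a face and its vertex set is contained in $aW_I(\mu)$.

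\textbf{Step 2: the vertices are all of $aW_I(\mu)$.} Every point of $W_0(\mu)$ is a vertex of $\CP_\mu$, because all these points have the same norm for a $W_0$-invariant inner product, so none is a proper convex combination of the others. This proves (1).

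\textbf{Step 3: every face arises, and the inverse map.} Let $\CF$ be a face cut out by a functional $\ell$. Choose $w\in W_0$ with $w^{-1}\ell$ dominant. Let $I$ be the set of simple roots on which $w^{-1}\ell$ vanishes, and write $w=au$ with $a\in W^I$ and $u\in W_I$. Since $u$ fixes $w^{-1}\ell$, the functional $a^{-1}\ell$ is dominant with exact vanishing set $I$. Step 1 then gives $\CF=\CF_{a,I}$. The same argument shows that the vertex set $aW_I(\mu)$ determines the coset union $aW_IW_{J_\mu}$: it is the set $\{x\in W_0\mid x\mu\in aW_I(\mu)\}=aW_I\,\mathrm{Stab}(\mu)=aW_IW_{J_\mu}$. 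Thus the map in (2) is well defined and injective: distinct sets $aW_IW_{J_\mu}$ have distinct vertex sets. Surjectivity is the first sentence of this step. The map preserves order both ways, because inclusion of faces is equivalent to inclusion of vertex sets, which in turn is equivalent to inclusion of the subsets $aW_IW_{J_\mu}$ (these subsets are full preimages of the vertex sets).

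\textbf{Step 4: part (3).} The intersection of two faces is either empty or a face. Its vertex set is the intersection of the two vertex sets, since vertices of a face are vertices of $\CP_\mu$. Taking preimages in $W_0$, this vertex set corresponds to $aW_IW_{J_\mu}\cap a'W_{I'}W_{J_\mu}$. If this intersection is empty, no vertex is shared, so the faces are disjoint. Otherwise, Step 3 identifies the intersection with $\CF_{a'',I''}$ for some $a''$ and $I''$, whose associated set is exactly the intersection.

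\textbf{Main obstacle.} The key technical point is the equality case in Step 1: showing that $\mu-\nu\in\sum_{\a\in I}\BZ_{\ge0}\a^\vee$ with $\nu\in W_0(\mu)$ forces $\nu\in W_I(\mu)$. I would prove it by taking the $I$-dominant representative $\nu'$ of $W_I\nu$. Then $\mu-\nu'$ still lies in the $I$-coroot cone. Both $\mu$ and $\nu'$ are $I$-dominant and lie in the same $W_0$-orbit. Comparing their projections onto the span of the $I$-coroots, together with equality of norms, then forces $\nu'=\mu$.
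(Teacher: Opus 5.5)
The paper does not prove this theorem. It quotes it from Maxwell \cite[Section 5]{MAX89}, where orbit polytopes are treated in the general setting of Wythoff's construction for Coxeter groups, and points to Khare--Ridenour and Khare for related results. Your proof is a correct, self-contained substitute using supporting functionals. Its core is two facts: the face on which $a(\eta)$ is maximized, for $\eta$ dominant vanishing exactly on $I$, is $\CF_{a,I}$; and every functional can be written as such an $a(\eta)$ with $a\in W^I$. Parts (2) and (3) then follow from comparing vertex sets via $W_0(\mu)\cong W_0/W_{J_\mu}$. Your route gives explicit exposing functionals and uses only elementary Weyl-group facts. Maxwell's treatment is more general and also gives information the paper mentions but does not use, such as $\dim\CF_{a,I}=|I|$ when no component of $I$ lies in $J_\mu$.

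Three points in your argument need one more line each.

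\textbf{The cone claim.} The assertion $\mu-\nu'\in\sum_{\alpha\in I}\BZ_{\ge0}\alpha^\vee$ does not follow from $\mu-\nu$ and $\nu'-\nu$ both lying in that cone, since a difference of cone elements need not lie in the cone. Argue instead as follows. First, $\mu-\nu'\in\sum_{\alpha\in\D_0}\BZ_{\ge0}\alpha^\vee$ because $\mu$ is dominant and $\nu'\in W_0(\mu)$. Second, $\mu-\nu'\in\sum_{\alpha\in I}\BZ\alpha^\vee$ because $\mu-\nu$ lies there and $\nu'\in W_I\nu$. Linear independence of the simple coroots then finishes it.

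\textbf{The norm comparison.} Take a $W_0$-invariant inner product $(\cdot,\cdot)$ and write $\beta=\mu-\nu'=\sum_{\alpha\in I}c_\alpha\alpha^\vee$ with $c_\alpha\ge0$. Then $0=(\mu,\mu)-(\nu',\nu')=(\beta,\mu+\nu')=\sum_{\alpha}c_\alpha(\alpha^\vee,\mu+\nu')$. Every term is $\ge0$ by $I$-dominance of $\mu$ and $\nu'$. So $c_\alpha>0$ forces $(\alpha^\vee,\mu)=(\alpha^\vee,\nu')=0$, hence $(\beta,\beta)=(\beta,\mu-\nu')=0$ and $\beta=0$.

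\textbf{The stabilizer.} The identification $\{x\in W_0\mid x(\mu)\in aW_I(\mu)\}=aW_IW_{J_\mu}$ uses that the stabilizer of the dominant coweight $\mu$ is exactly $W_{J_\mu}$. This is standard but should be stated.
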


If $I=\emptyset$, then $\CF_{a, I}$ is the 0-dimensional face $\{a(\mu)\}$. If $I = \D_0$, then $\CF_{a, I}$ is the maximal face $\CP_{\mu}$. Figure 1 shows the faces of $\CP_{\mu}$ in the case of type $A_2$ and $\mu = 2\omega_1^{\vee} = (2,0,0)$. Figure 2 shows the faces of $\CP_{\mu}$ in the case of type $C_2$ and $\mu = \omega_1^{\vee} = (1,0)$.

\newpage
\begin{figure}[htbp]
\centering
\begin{minipage}{0.45\textwidth}
\centering
\begin{tikzpicture}[scale=0.6]

\coordinate (T) at (0, 2.6);  
\coordinate (B) at (0, -2.6); 
\coordinate (R) at (4.5, 0);

\draw[ultra thick] (T) -- (R) -- (B) -- cycle;

\coordinate (MidCenter) at (1.5, 0);
\node at (MidCenter) {\scriptsize $\CF_{1, \{\a_1,\a_2\}}$};

\node[above, yshift=2mm] at (T) {\scriptsize $\CF_{s_{1}, \emptyset}$};,
\node[below, yshift=-2mm] at (B) {\scriptsize $\CF_{s_2 s_1, \emptyset}$};
\node[right, xshift=2mm] at (R) {\scriptsize $\CF_{1, \emptyset}$};

\coordinate (MidL) at (0.2, 0);
\coordinate (MidUR) at (2.1, 1.1);
\coordinate (MidBR) at (2.1, -1.1);

\node[left, xshift=-2mm] at (MidL) {\scriptsize $\CF_{s_1 s_2, \{\a_1\}}$};
\node[above right, xshift=1mm, yshift=1mm] at (MidUR) {\scriptsize $\CF_{1, \{\a_1\}}$};
\node[below right, xshift=1mm, yshift=-1mm] at (MidBR) {\scriptsize $\CF_{s_{2}, \{\a_1\}}$};

\end{tikzpicture}
\caption{}
\end{minipage} 
\begin{minipage}{0.45\textwidth}
\centering

\begin{tikzpicture}[scale=1.8]
\coordinate (L) at (-1, 0);   
\coordinate (T) at (0, 1);  
\coordinate (R) at (1, 0);   
\coordinate (B) at (0, -1); 
\coordinate (MidBL) at (-1/2, -1/2); 
\coordinate (MidBR) at (1/2, -1/2);
\coordinate (MidTR) at (1/2, 1/2); 
\coordinate (MidTL) at (-1/2, 1/2);

\draw[ultra thick] (T) -- (R) -- (B) -- (L) -- cycle;

\node at (0, 0) {\scriptsize $\mathcal{F}_{1, \{\a_1,\a_2\}}$};

\node[left] at (L)    {\scriptsize $\mathcal{F}_{s_1s_2s_1, \emptyset}$};
\node[below] at (B)   {\scriptsize $\mathcal{F}_{s_2s_1, \emptyset}$}; 
\node[right] at (R)   {\scriptsize $\mathcal{F}_{1, \emptyset}$};
\node[above] at (T)   {\scriptsize $\mathcal{F}_{s_1, \emptyset}$};  
\node[below left] at (MidBL) {\scriptsize $\mathcal{F}_{s_2s_1s_2, \{\a_1\}}$}; 
\node[below right] at (MidBR) {\scriptsize $\mathcal{F}_{s_2, \{\a_1\}}$};   
\node[above right] at (MidTR) {\scriptsize $\mathcal{F}_{1, \{\a_1\}}$};     
\node[above left] at (MidTL) {\scriptsize $\mathcal{F}_{s_1s_2, \{\a_1\}}$};  

\end{tikzpicture}
\caption{}
\end{minipage}
\end{figure}

For a face $\CF$ of $\CP_{\mu}$, there may be more than one pair $(I, a)$ with $\CF = \CF_{a, I}$. By \cite[Section 2]{MAX89}, one can choose $(I, a)$ with $\CF = \CF_{a, I}$ such that there is no connected component of $I$ lying in $J_{\mu}$, see also \cite[Corollary 1.3]{Ren09} and \cite[Proposition 3.2]{Vin91}. In this case, the dimension of $\CF_{a, I}$ is equal to the cardinality of $I$. We don't need this result in this paper, but it is helpful for the reader to keep in mind that one can choose $I$ in this way.

\section{The Admissible Set $\Adm(\mu)$}\label{sec:adm}

%The set of positive affine roots is defined as $\tilde\Phi^+ = (\Phi^+ \times \BZ_{\ge0} )\sqcup (\Phi^- \times \BZ_{\ge1})$, and its complement is the set of negative affine roots. 

The \emph{extended affine Weyl group} is defined as
$$\tW = X_*\rtimes W_0 = \{ t^{\l}z \mid \l \in X_*, z\in W_0\}.$$
The set of simple affine reflections is $\tilde{\BS} = \{s_\a \mid \a \in \D_0 \} \sqcup \{ t^{\th^{\vee}}s_{\th} \}$, where $\th$ runs over the highest roots of the connected components of $\Phi$ and $\th^{\vee}$ is the coroot dual to $\th$. Let $\ell$ be the length function
\begin{align*}
    \ell(t^\l z) = \sum_{\a \in \Phi^+,z^{-1}(\a) \in \Phi^+} |\< \l,\a\> | + \sum_{\a \in \Phi^+,z^{-1}(\a) \in \Phi^-} |\< \l,\a\> - 1|. 
\end{align*}

%By convention, we view $\Phi$ as a subset of $\tilde\Phi$ via the embedding $\a \to (\a,0)$. In particular, $\Delta_0$ is viewed as a subset of $\tilde\Delta$. We also view $W_0$ as a natural subset of $\tW$.

%Note that $\tW$ acts on For any $w = t^{\l}z \in   \tW$, the action on $\Phi_{\aff}$ is given by $$\tilde{\a} = (\a,k)\in \Phi\times\BZ\mapsto w(\a,k) = (z(\a), k-\<\l,z(\a)\>).$$ The affine reflection corresponding to $\tilde\a \in \Phi_{\aff}^+$ is $s_{\tilde{\a}} = s_{\a} t^{k\a^{\vee}} \in \tW$. 

Let $W_{\aff}$ be the affine Weyl group, that is, the subgroup of $\tW$ generated by $\tS$. Then $(W_{\aff},\tS)$ is a Coxeter system. It is clear that $W_{\aff} = \BZ\Phi^{\vee} \rtimes W_0$ and $\tW = W_{\aff} \rtimes \Omega$, where $\Omega$ is the subset of length-zero elements in $\tW$. Let $\leq$ be the Bruhat order of the Coxeter group $W_{\aff}$. We extend $\leq$ to a partial order on $\tW$ as follows. For $w_1,w_2 \in W_{\aff}$ and $\t_1, \t_2 \in \Omega$, define $w_1\t_1 \le w_2\t_2$ if $\t_1 = \t_2$ and $w_1\le w_2$.

The $\mu$-admissible set of $\tW$ is defined as
\begin{align*}
    \Adm(\mu) = \{w \in \tW \mid w \le t^{\mu'}\text{ for some } \mu' \in W_0(\mu)\}.
\end{align*}

It is convenient to think of $\Adm(\mu)$ as the alcove-theoretic analogue of the coweight polytope $\CP_{\mu}$, which we now explain.

The set of \emph{affine roots} is $\tilde\Phi= \Phi \times \BZ$. For each affine root $\ta=(\a,k)\in \Phi_{\aff}$, the associated root hyperplane is $H_{\ta} = \{ v \in V \mid k+ \<v,\a\> =0   \}$ and the associated affine reflection is $s_{\ta} = s_{\a} t^{k\a^{\vee}}$, where $\a^{\vee}$ is the coroot dual to $\a$.

By definition, \emph{alcoves} are connected components of $V - \bigcup_{\tilde{\a}}H_{\tilde{\a}}$, where $\tilde{\a}$ runs over the set of affine roots $\tilde\Phi$. Let $\Sigma$ be the set of alcoves. Note that $\tW$ acts on $V$ via affine transformation $(t^\l z,v)\mapsto z(v) +\l$. Then $\tW$ acts on $\Sigma$ naturally. The base alcove is defined as 
$$\mathfrak{a} = \{ v\in V\mid 0 < \<v,\a \> < 1\text{ for every }\a\in \Phi^+\}.$$
There is a natural bijection between $W_{\aff}$ and $\Sigma$ sending $w$ to $w(\mathfrak{a})$. Note that for $w\in\tW$, the length $\ell(w)$ is the number of root hyperplanes $H_{\ta}$ separating $\mathfrak{a}$ and $w(\mathfrak{a})$. In particular, the set of length-zero elements $\Omega$ is the stabilizer of the base alcove $\mathfrak{a}$.

For any alcove $\mathfrak{b}$, we define the Bruhat order $\le_{\mathfrak{b}}$ on $\Sigma$ with respect to $\mathfrak{b}$ as follows. Let $\mathfrak{b}', \mathfrak{b}'' \in \Sigma$. Define $\mathfrak{b}' \le_{\mathfrak{b}} \mathfrak{b}''$ if there exists a sequence of alcoves $\mathfrak{b}_1 = \mathfrak{b}'$, $\mathfrak{b}_2, \ldots, \mathfrak{b}_{r+1} = \mathfrak{b}''$, and affine roots $\ta_1,\ta_2,\ldots,\ta_{r}$ such that for each $i$, we have $\mathfrak{b}_{i+1} = s_{\ta_i}(\mathfrak{b}_i)$ and $\mathfrak{b}_i$ and the base alcove $\mathfrak{a}$ are on the same side of the hyperplane $H_{\ta_i}$. For $w' \in \tW$ and $w \in W_{\aff} w'$, we have $w(\mathfrak{a}) \le_{\mathfrak{a}} w' (\mathfrak{a}) \text{ if and only if } w \le w'$.

For any root hyperplane $H$ and $z \in W_0$, let $H^{z-} = H + z(C^-)$ be the connected component of $V - H$ that contains any sufficiently
deep alcoves in the Weyl chamber $z(C^-)$. Here, $C^- =\{v \in V\mid \<v, \a\> < 0\text{ for every }\a \in \Phi^+\}$ is the anti-dominant Weyl chamber.

Let $w'\in\tW$ and $z\in W_0$. Following \cite[Definition 1.1]{HH17}, we define the \emph{combinatorial obtuse cone} $O(w',z)$ as the set of $w\in W_{\aff}w'$ such that $w(\mathfrak{a}) \le_{\mathfrak{b}} w'(\mathfrak{a})$ for any alcove sufficiently deep inside the Weyl chamber $z(C^-)$. Equivalently, $O(w',z)$ is the set of $w\in W_{\aff}w'$ such that there exists a sequence of alcoves from $w'(\mathfrak{a})$ to $w(\mathfrak{a})$ in the $z(\Phi^-)$-direction, that is, a sequence of alcoves $\mathfrak{b}_1 = w'(\mathfrak{a})$, $\mathfrak{b}_2, \ldots, \mathfrak{b}_{r+1} = w(\mathfrak{a})$, and affine roots $\ta_1,\ta_2,\ldots,\ta_{r}$ with $\mathfrak{b}_{i+1} = s_{\ta_i}(\mathfrak{b}_i)$ and $\mathfrak{b}_{i+1} \in (H_{\ta_i})^{z-}$ for each $i$.

%Let $I$ be a subset of $\subset\D_0$. We denote by $W_I$ the subgroup of $W_0$ generated by $s_\a$, $\a\in I$. We denote by $\Phi_I$ the subroot system of $\Phi$ spanned by $I$. Set $\Phi^{\pm} = \Phi^{\pm}$. Let $W^I = \{a \in W_0 \mid a(\Phi_I^+) \subset \Phi^+\}$ be the set of minimal representatives of the left cosets of $W_I$.

The combinatorial obtuse cone $O(t^{z(\mu)},z)$ should be regarded as an analogue of the \emph{geometric obtuse cone} $z(\mu) + z(C^-)$ in $V$. It is clear that 
\begin{align}\label{eq:conv}
P_{\mu} = \bigcap_{z \in W_0} \left(z(\mu) + z(C^-)\right).
\end{align}
The following theorem, due to Haines-He, is the alcove-theoretic analogue of (\ref{eq:conv}).

\begin{theorem}[{\cite[Theorem 1.3]{HH17}}]\label{thm:HH}
We have
\begin{align*}
   \Adm(\mu) = \bigcap_{z \in W_0} O(t^{z(\mu)},z). 
\end{align*}
\end{theorem}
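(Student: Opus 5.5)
The inclusion $\subseteq$ and the inclusion $\supseteq$ call for different arguments, and I would treat them separately. Throughout I would use the alcove picture: $w\in O(w',z)$ means $w(\mathfrak a)$ is obtained from $w'(\mathfrak a)$ by successive reflections that move "towards" the chamber $z(C^-)$.

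\textbf{The inclusion $\Adm(\mu)\subseteq\bigcap_z O(t^{z(\mu)},z)$.} I would reduce this to two facts.
\begin{enumerate}
\item[(a)] For every $\mu'\in W_0(\mu)$ and every $z\in W_0$, we have $t^{\mu'}\in O(t^{z(\mu)},z)$.
\item[(b)] For $w\le t^{\mu'}$ in the usual Bruhat order, we have $w(\mathfrak a)\le_{\mathfrak b}t^{\mu'}(\mathfrak a)$ for $\mathfrak b$ deep in $z(C^-)$.
\end{enumerate}
Since $\le_{\mathfrak b}$ is transitive, (a) and (b) together put $w$ in $O(t^{z(\mu)},z)$.

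For (a), note that $z(\mu)-\mu'$ is a non-negative integral combination of the coroots $z(\a^\vee)$ with $\a\in\Phi^+$. I would write $\mu'$ as obtained from $z(\mu)$ by a chain of reflections $s_\b$ in the finite Weyl group, each step lowering in the $z$-dominance order. Each such step is realised by affine reflections of alcoves whose hyperplanes separate the target alcove from the deep region in the required direction. Concretely, $t^{s_\b\l}(\mathfrak a)$ is reached from $t^{\l}(\mathfrak a)$ by reflecting along the hyperplanes $H_{(\b,k)}$ lying between them.

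For (b), I would translate the base point. For $\nu$ deep in $z(C^-)$ the order $\le_{t^\nu(\mathfrak a)}$ is the Bruhat order conjugated by $t^{\nu}$. The key input is then the standard fact that every reflection hyperplane separating $w(\mathfrak a)$ from $t^{\mu'}(\mathfrak a)$ lies between $\mathfrak a$ and the strip containing $t^{\mu'}(\mathfrak a)$. Hence the side conditions for $\mathfrak a$ and for $\mathfrak b$ agree along a suitable chain; I would take a reduced-expression subword chain for $w\le t^{\mu'}$. This step needs care, but it is local.

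\textbf{The inclusion $\supseteq$.} This is the main obstacle. Let $w\in\bigcap_z O(t^{z(\mu)},z)$. I would argue by downward induction on $\ell(w)$, using that the intersection is finite: the alcove $w(\mathfrak a)$ is forced to lie in $\CP_\mu$ by (\ref{eq:conv}). The claim to prove is that every element of the intersection either is a translation $t^{\mu'}$ with $\mu'\in W_0(\mu)$, or admits an affine reflection $s$ such that
\begin{itemize}
\item $w<sw$, and
\item $sw$ again lies in every cone $O(t^{z(\mu)},z)$.
\end{itemize}
Iterating this along the finite set ends at a maximal element, which must then be some $t^{\mu'}$, and so $w\le t^{\mu'}$.

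To produce $s$, I would write $w=t^\l y$ with $y\ne1$. I would look at the vertex $\l$ of $w(\mathfrak a)$, or more precisely the vertex of $w(\mathfrak a)$ closest to the boundary face of $\CP_\mu$ it is pushed towards. I would then choose a hyperplane $H_{\ta}$ through a wall of $w(\mathfrak a)$ with two properties: reflecting across it moves $w(\mathfrak a)$ away from $\mathfrak a$, and it moves towards the chamber $z_0(C^+)$ containing the relevant vertex $\mu'=z_0(\mu)$.

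The delicate point is to check that $sw$ stays in $O(t^{z(\mu)},z)$ for every $z$, not just for $z=z_0$. I would first try to handle this with a "partial conjugation / reduction to the Levi of the face containing $\l$" argument. That is, reduce to the case where $\l$ is in the interior of a face $\CF_{a,I}$, use Theorem~\ref{thm:Maxwell} to control which chambers $z$ impose binding constraints, and show that the chosen reflection respects all of them.
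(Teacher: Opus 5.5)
The paper gives no proof of this statement; it is quoted from Haines--He \cite{HH17}. So your argument has to stand on its own, and it has two genuine gaps.

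\textbf{The inclusion $\subseteq$.} Step (b) is false for general $z$.

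\emph{Counterexample.} Take type $A_1$ (e.g.\ $SL_2$) and $\mu=\alpha^\vee$. Since $s_0=t^{\alpha^\vee}s_\alpha$, we have $t^{\alpha^\vee}=s_0s_\alpha$, hence $s_\alpha\le t^{\alpha^\vee}$. For $z=s_\alpha$ we have $z(C^-)=C^+$. So every step of a chain defining $O(t^{\alpha^\vee},s_\alpha)$ lands on the side of its hyperplane where $\langle v,\alpha\rangle$ is larger. Starting from $t^{\alpha^\vee}(\mathfrak a)=\{2<\langle v,\alpha\rangle<3\}$, every alcove of $O(t^{\alpha^\vee},s_\alpha)$ therefore lies in $\{\langle v,\alpha\rangle>2\}$. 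But $s_\alpha(\mathfrak a)=\{-1<\langle v,\alpha\rangle<0\}$, so $s_\alpha(\mathfrak a)\not\le_{\mathfrak b}t^{\alpha^\vee}(\mathfrak a)$ for $\mathfrak b$ deep in $s_\alpha(C^-)$.

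\emph{Where (b) does hold.} Your separating-hyperplane heuristic works when $\mathfrak a$ lies on a minimal gallery from $\mathfrak b$ to $t^{\mu'}(\mathfrak a)$. For $\mathfrak b=t^{\nu}(\mathfrak a)$ with $\nu$ deep in $z(C^-)$, this happens iff $\ell(t^{-\nu}t^{\mu'})=\ell(t^{-\nu})+\ell(t^{\mu'})$. That is, iff $\mu'\in z(\overline{C^+})$, i.e.\ iff $\mu'=z(\mu)$. In that case (b) is true: length additivity plus the subword property give $t^{-\nu}w\le t^{-\nu}t^{\mu'}$. But then (a) is vacuous. So (a) and (b) legitimately give only $\{w\le t^{z(\mu)}\}\subseteq O(t^{z(\mu)},z)$.

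\emph{No easy repair.} You cannot always choose $\mu'=z(\mu)$, because in general $w\not\le t^{z(\mu)}$. For instance, in type $A_2$ with $\mu=\omega_1^\vee$, each length-one element of $\Adm(\mu)$ lies below only two of the three translations. The real content of $\subseteq$ is that $w\le t^{\mu'}$ with $\mu'\neq z(\mu)$ still forces $w\in O(t^{z(\mu)},z)$. That needs a comparison of $\le$ with $\le_{\mathfrak b}$ which does not factor through $t^{\mu'}$, and the proposal gives none.

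\textbf{The inclusion $\supseteq$.} You correctly isolate what must be shown: every element of the intersection other than the $t^{\mu'}$ admits an affine reflection $s$ with $w<sw$ and $sw$ in every cone. But you do not prove it, and it is the heart of the Haines--He theorem.

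\emph{What the sketch leaves uncontrolled.} Take a candidate $s=s_{\tilde\alpha}$.
\begin{itemize}
\item For those $z$ with $sw(\mathfrak a)\subset(H_{\tilde\alpha})^{z-}$, membership $sw\in O(t^{z(\mu)},z)$ is automatic by transitivity.
\item For every other $z$ it is a genuine constraint. This includes your own $z_0$, since you deliberately move towards $z_0(C^+)$, i.e.\ away from $z_0(C^-)$. There one needs $sw(\mathfrak a)\le_{\mathfrak b}t^{z(\mu)}(\mathfrak a)$ even though $sw(\mathfrak a)$ is farther from $\mathfrak b$ than $w(\mathfrak a)$.
\end{itemize}
Nothing in the sketch controls the second case. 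The ``partial conjugation / Levi reduction'' is a plan, not an argument.

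\emph{Two smaller points.}
\begin{itemize}
\item Your case split omits translations $t^\lambda$ with $\lambda\notin W_0(\mu)$, e.g.\ $w=1$ in the $A_1$ example above.
\item What the cones give is $w(e)-e\in\CP_\mu$ for $e\in\mathfrak a$, which suffices for finiteness. They do not give $w(\mathfrak a)\subset\CP_\mu$, which already fails for $w=t^{\mu}$.
\end{itemize}
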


Let $I \subset \D_0$ and $a \in W^I$. We define a subset $\Adm(\mu)_{a, I}$ as follows. 

Let $\Phi_{a, I}$ be the subroot system of $\Phi$ spanned by $\{a(\a) \mid \a\in J\}$. Let $\D_{a, I} = \{a(\a)\mid \a \in I\}$ and $\Phi_{a, I}^{\pm} = \Phi_{a, I} \cap \Phi^{\pm}$. Since $a \in W^I$, $\D_{a, I}$ is the base of $\Phi_{a, I}$ corresponding to the positive system $\Phi_{a, I}^+$. Let $W_{a, I}$ be the reflection subgroup of $W_0$ generated by $\{s_{\b} \mid \b \in \Phi_{a, I}\}$. Let $\tW_{a, I} = X_*\rtimes W_{a, I}$ be the extended affine Weyl group of the based root datum $(\Phi_{a, I}, X^*, \Phi_{a, I}^{\vee}, X_*, \D_{a, I})$.

Define
\begin{align*}
\Adm(\mu)_{a, I} = \Adm^{\tW_{a, I}}(a(\mu)),
\end{align*}
where $\Adm^{\tW_{a, I}}(a(\mu))$ is the $a(\mu)$-admissible set of $\tW_{a, I}$, which is the set of $w \in \tW_{a, I}$ such that $w \le_{a, I} au(\mu)$ for some $u \in W_I$. Here, $\le_{a, I}$ is the Bruhat order on $\tW_{a, I}$. By a standard result of Dyer \cite[Corollary 3.4]{Dyer90}, for $w,w'\in \tW_{a, I}$, if $w \le_{a, I} w'$, then $w \le w'$. Hence, $\Adm(\mu)_{a, I}$ is contained in $\Adm(\mu)$. Note that the maximal elements in $\Adm(\mu)_{a, I}$ are $t^{\mu'}$, where $\mu' \in aW_I(\mu)$, the set of vertices of $\CF$. We call $\Adm(\mu)_{a, I}$ a \emph{face} of $\Adm(\mu)$.

%Then, for $I, I' \subset\D_0$ and $a, a' \in W^I$, if $\Adm(\mu)_{a, I} = \Adm(\mu)_{I', a'}$, then $\CF_{a, I} = \CF_{I', a'}$. 

If $I=\emptyset$, then $\Adm(\mu)_{a, I} = \{t^{a(\mu)}\}$. If $I = \D_0$, then $\Adm(\mu)_{a, I}$ is equal to $\Adm(\mu)$. Figure 3 is an example in the case of type $A_2$ and $\mu = 2\omega_1^{\vee}$. The origin is at the thick dot. The set $\Adm(\mu)$ is the area inside the thick line. The face $\Adm(\mu)_{\emptyset, s_1}$ consists of the alcoves shaded in dark gray. The face $\Adm(\mu)_{\{\a_1\}, s_2}$ consists of the alcove shaded in light gray. Figure 4 is an example in the case of type $C_2$ and $\mu = \omega_1^{\vee}$. The origin is at the thick dot. The set $\Adm(\mu)$ is the area inside the thick line. The face $\Adm(\mu)_{\{\a_1\}, 1}$ consists of the alcoves shaded in dark gray. The face $\Adm(\mu)_{\{\a_1\}, s_2s_1s_2}$ consists of the alcove shaded in light gray. We see from this example that a face of $\Adm(\mu)$ may not even be ``connected". The reader can compare Figures 3 and 4 with Figures 1 and 2.

\begin{figure}[htbp]
\centering
\begin{minipage}{0.45\textwidth}
\centering
\begin{tikzpicture}[scale=0.7]
\draw[fill=lightgray] (-1.1547005383792515, -2) -- (-0.5773502691896257, -1) -- (0.5773502691896257, -1) --(1.1547005383792515,0)--(2.309401076758503,0)--(2.8867513459481287,1)--(3.464101615137755,0)--(2.309401076758503,0)--(1.7320508075688774,-1)--(0.5773502691896257,-1)--(0.0,-2)-- cycle;
\draw[fill=darkgray](-1.1547005383792515,2)--(-0.5773502691896257,3)--(0,2)--cycle;

\begin{scope}
\clip (-3.5, -3.5) rectangle (4.5, 4.5);
\foreach \i in {-4.618802153517006, -3.464101615137755, -2.309401076758503, -1.1547005383792515, 0.0, 1.1547005383792515, 2.309401076758503, 3.464101615137755, 4.618802153517006} {
\draw[black, thin] (\i,0) -- (\i+ 2.309401076758503, 4);
\draw[black, thin] (\i,0) -- (\i-2.309401076758503,-4);
\draw[black, thin] (\i,0) -- (\i-2.309401076758503,4);
\draw[black, thin] (\i,0) -- (\i+2.309401076758503,-4);
}
\foreach \k in {-4,-3,...,4} {
\draw[black, thin] (-10, \k) -- (10, \k);
}
\end{scope}
\draw[line width=1mm]  (-1.1547005383792515, 0) -- (-0.5773502691896257, 1) -- (-1.1547005383792515, 2) -- (-0.5773502691896257, 3) -- (0, 2) -- (1.1547005383792515, 2) -- (1.7320508075688772, 1) --(2.8867513459481287,1)--(3.4641016151377544,0) --(2.309401076758503,0)--(1.7320508075688774,-1)--(0.5773502691896257,-1)--(0,-2)--(-1.1547005383792515,-2)--(-0.5773502691896257,-1) --cycle;
\draw[fill, black] (0,0) circle [radius=.11cm];
\end{tikzpicture}
\caption{}
\end{minipage}%
\hfill
\begin{minipage}{0.45\textwidth}
\centering
\begin{tikzpicture}[scale=0.4]
\draw[fill=darkgray](0,4)--(2,6)--(2,4)--cycle;
\draw[fill=darkgray](4,2)--(6,2)--(6,0)--(4,0)--cycle;
\draw[fill=lightgray](-4,0)--(-2,2)--(-2,0)--cycle;
\draw[fill=lightgray](0,-2)--(0,-4)--(2,-4)--(2,-2)--cycle;

\begin{scope}
\clip (-6.5, -6.5) rectangle (8, 7.5);
\foreach \i in {-8,-6,...,8} {
\draw[black, thin] (-10,\i) -- (10,\i);
\draw[black, thin] (\i,-10) -- (\i,10);
}
\foreach \k in {-4,-3,...,4} {
\draw[black, thin] (-10, {-10 + 4*\k}) -- (10, {10 + 4*\k});
\draw[black, thin] (-10, {10 - 4*\k}) -- (10, {-10 - 4*\k});
}
\end{scope}
\draw[line width=1mm] (0, 0) -- (-4, 0) -- (2, 6) -- (2, 2) -- (6, 2) -- (6, 0) -- (4, 0) --(4,-2)--(2,-2)--(2,-4)--(0,-4)-- cycle;
\draw[fill, black] (0,0) circle [radius=.23cm];
\end{tikzpicture}
\caption{}
\end{minipage}
\end{figure}

%Let $\le_{a, I}$ be the Bruhat order of $\tW_{a, I}$. By a standard result of Dyer \cite[Corollary 3.4]{Dyer90}, for $w,w'\in W_{a, I, \aff}t^{a(\mu)}$, if $w \le_{a, I} w'$, then $w \le w'$.

We now give an alternate description of the face $\Adm(\mu)_{a, I}$, which plays an important role in the proof of Theorem \ref{thm:main1}. Let $W_{a, I, \aff} = \Phi_{a, I}^{\vee} \rtimes W_{a, I}$ be the affine Weyl group of the based root datum $(\Phi_{a, I}, X^*, \Phi_{a, I}^{\vee}, X_*, \D_{a, I})$. Note that $s_{\tb} \in W_{a, I, \aff}$ for any $\tb \in \Phi_{a, I} \times \BZ$.

\begin{lemma}\label{lem:def2}
Let $I \subset \D_0$ and $a \in W^I$. Then 
$$\Adm(\mu)_{a, I} = \Adm(\mu) \cap  W_{a, I, \aff} t^{a(\mu)}.$$
\end{lemma}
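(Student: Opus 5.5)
The inclusion ``$\subseteq$'' is straightforward, and the reverse inclusion will go through the Haines--He description (Theorem \ref{thm:HH}), applied once to $\tW$ and once to the smaller group $\tW_{a, I}$.

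\emph{Step 1: the inclusion ``$\subseteq$''.} We already saw that $\Adm(\mu)_{a, I}\subset\Adm(\mu)$ by Dyer's result. Each $w\in\Adm(\mu)_{a, I}$ satisfies $w\le_{a, I} t^{au(\mu)}$ for some $u\in W_I$, so it lies in $W_{a, I,\aff}t^{au(\mu)}$. Moreover $t^{au(\mu)}t^{-a(\mu)} = t^{a(u(\mu)-\mu)}$ and $u(\mu)-\mu\in\BZ(\Phi_I)^\vee$, so $a(u(\mu)-\mu)\in \BZ\Phi_{a, I}^\vee$. Hence $W_{a, I,\aff}t^{au(\mu)}=W_{a, I,\aff}t^{a(\mu)}$, which gives the inclusion.

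\emph{Step 2: reduce ``$\supseteq$'' to obtuse cones in $\tW_{a, I}$.} Take $w\in\Adm(\mu)\cap W_{a, I,\aff}t^{a(\mu)}$. Applying Theorem \ref{thm:HH} to the root datum $(\Phi_{a, I},\dots,\D_{a, I})$, it suffices to show the following. For every $y\in W_{a, I}$, $w$ lies in the combinatorial obtuse cone $O^{a, I}(t^{y a(\mu)},y)$ of $\tW_{a, I}$, defined using only alcoves and hyperplanes of $\Phi_{a, I}$. Note that $W_{a, I}=aW_Ia^{-1}$, so $ya(\mu)$ ranges over $aW_I(\mu)$. Also, the dominance convention for $\tW_{a, I}$ makes $a(\mu)$ dominant with respect to $\D_{a, I}$, because $a\in W^I$.

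\emph{Step 3 (main obstacle): transfer a chain to the Levi.} Choose $z\in W_0$ such that $z(C^-)$ lies in the $\Phi_{a, I}$-chamber $y(C^-_{a, I})$ and $z(\mu)=ya(\mu)$; for instance take $z=ya\,v$ with $v$ chosen to be $\Phi_I$-compatible. Since $w\in O(t^{z(\mu)},z)$, there is a chain of alcoves from $t^{z(\mu)}(\mathfrak a)$ to $w(\mathfrak a)$ whose reflections move in the $z(\Phi^-)$-direction. I would project this chain onto the $\Phi_{a, I}$-alcove structure, that is, the coarser arrangement given by the hyperplanes $H_{\tb}$ with $\tb\in\Phi_{a, I}\times\BZ$. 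Reflections in roots outside $\Phi_{a, I}$ should become trivial, or be replaceable by ones inside, so that the chain yields a $y(\Phi_{a, I}^-)$-directed chain in the Levi.

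This step is delicate, because the intermediate alcoves need not lie in the coset $W_{a, I,\aff}t^{a(\mu)}$. The cleaner route I would try first replaces chains by the defining inequality $\le_{\mathfrak b}$ for deep $\mathfrak b$. Then I would invoke Dyer's theorem: the Bruhat order on $W_{\aff}$ relative to a deep alcove $\mathfrak b$, restricted to a coset of the reflection subgroup $W_{a, I,\aff}$, agrees with the Bruhat order of that reflection subgroup relative to the canonical generators determined by $\mathfrak b$. Dyer's results on reflection subgroups say that the induced order is the Bruhat order of the reflection subgroup on such cosets. Here $\mathfrak b$ lies deep in $z(C^-)$, hence deep in $y(C_{a, I}^-)$, so those generators correspond exactly to the $y$-twisted Levi obtuse-cone order. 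This gives $w(\mathfrak a)\le^{a, I}_{\mathfrak b'} t^{ya(\mu)}(\mathfrak a)$ for deep $\Phi_{a, I}$-alcoves $\mathfrak b'$, which is the membership in $O^{a, I}(t^{ya(\mu)},y)$ we need.

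Intersecting over all $y$ and applying Theorem \ref{thm:HH} for $\tW_{a, I}$ then gives $w\in\Adm(\mu)_{a, I}$.
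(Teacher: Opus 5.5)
Your Steps 1 and 2 are fine and match the paper. In Step 3 the right choice is simply $z=au$ when $y=aua^{-1}$. But Step 3 is the entire content of the reverse inclusion, and neither of your two routes proves it.

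The projection route is only a hope (``reflections outside $\Phi_{a,I}$ should become trivial, or be replaceable''), with no mechanism given. The ``cleaner route'' misreads Dyer: \cite[Corollary 3.4]{Dyer90} says only that $x\le_{a,I}x'$ implies $x\le x'$. That is the easy direction, and it is what gives $\Adm(\mu)_{a,I}\subset\Adm(\mu)$. You need the converse, going from a relation in the big group to one in the reflection subgroup. That converse is false for general reflection subgroups. In the Weyl group of type $B_2$, $s_1$ and $s_2s_1s_2$ are reflections in orthogonal roots, so they are the canonical generators of the subgroup they generate and are incomparable there, although $s_1<s_2s_1s_2$. Indeed, the paper only obtains such a converse, restricted to $\Adm(\mu)_{a,I}$, in Corollary \ref{cor:Bruhat-closed}(2), and that proof uses the present lemma.

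The missing idea is that no projection or replacement is needed, because reflections outside $\Phi_{a,I}$ cannot occur at all. Fix $e\in\mathfrak a$ and take the chain from $t^{au(\mu)}(\mathfrak a)$ to $w(\mathfrak a)$ in the $au(\Phi^-)$-direction given by $w\in O(t^{au(\mu)},au)$.
\begin{itemize}
\item Each step moves the image of $e$ by $-d_i\g_i^\vee$ with $d_i>0$ and $\g_i\in au(\Phi^+)$, so $t^{au(\mu)}(e)-w(e)=\sum_i d_i\g_i^\vee$.
\item Since $w\in W_{a,I,\aff}t^{au(\mu)}$, the left side lies in $\BR\Phi_{a,I}^\vee=au(\BR\Phi_I^\vee)$.
\item Applying $(au)^{-1}$ gives a nonnegative combination of positive coroots lying in the span of the simple coroots $\a^\vee$, $\a\in I$. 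This forces every $(au)^{-1}\g_i^\vee$ into $\Phi_I^\vee$.
\end{itemize}
Hence every $\tb_i$ lies in $\Phi_{a,I}\times\BZ$, and all intermediate alcoves stay in the coset, so your worry about leaving it disappears. Because $au(\Phi_I^-)=aua^{-1}(\Phi_{a,I}^-)$ for $a\in W^I$, the same chain already witnesses $w\in O^{\tW_{a,I}}(t^{au(\mu)},aua^{-1})$.

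This is also why the choice $z=au$ matters. It makes $z^{-1}(\Phi_{a,I})=\Phi_I$ a standard parabolic subsystem, so $\BR\Phi_I^\vee$ meets the positive cone in a face. For a chamber not adapted in this way, a positive combination of coroots outside the Levi can land in $\BR\Phi_{a,I}^\vee$.
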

\begin{proof} 
The ``$\subset$" direction follows from the definition $\Adm(\mu)_{a, I} = \Adm^{\tW_{a, I}}(a(\mu))$. 

We now prove the ``$\supset$" direction. By Theorem \ref{thm:HH},
\begin{align*}
    \Adm^{\tW_{a, I}}(a(\mu)) = \bigcap_{u \in W_I} O^{\tW_{a, I}}(t^{a u (\mu)}, a u a^{-1}).
\end{align*}
Here, for $u \in W_I$, the set $O^{\tW_{a, I}}(t^{au(\mu)}, a u a^{-1})$ is the combinatorial obtuse cone of $\tW_{a, I}$. In other words, it is the set of $w \in W_{a, I, \aff}t^{au(\mu)}$ such that there is a sequence of alcoves from $t^{au(\mu)}(\mathfrak{a})$ to $w(\mathfrak{a})$ in the $aua^{-1}(\Phi_{a, I}^-)$-direction, that is, a sequence of alcoves $\mathfrak{b}_1 = t^{au(\mu)}(\mathfrak{a})$, $\mathfrak{b}_2, \ldots, \mathfrak{b}_{r+1} = w(\mathfrak{a})$, and affine roots $\tb_1,\tb_2,\ldots,\tb_{r} \in \Phi_{a, I} \times \BZ$ such that $\mathfrak{b}_{i+1} = s_{\tb_i}(\mathfrak{b}_i)$ and $\mathfrak{b}_{i+1} \in (H_{\tb_i})^{a u a^{-1}-}$ for each $i$.

%elements $w_1 = t^{au(\mu)}, w_2, \ldots, w_{r+1} = w$ and affine roots $\tb_{1}, \tb_2,\ldots, \tb_r \in \Phi_{a, I}\times \BZ$ such that $w_{i + 1} = s_{\tb_i} w_i$ and $w_{i+1}(\mathfrak{a}) \in H_{\tb_i}^{aua^{-1}-}$ for each $i$.

Let $w \in  \Adm(\mu) \cap  W_{a, I, \aff} t^{a (\mu)} $. It suffices to prove that $w \in O^{\tW_{a, I}}(t^{au(\mu)}, a u a^{-1})$ for any $u \in W_I$. Since $w \in \Adm(\mu)$, $w \in O(t^{au(\mu)}, a u)$ by Theorem \ref{thm:HH}. Then
there exists a sequence of alcoves from $t^{au(\mu)}(\mathfrak{a})$ to $w(\mathfrak{a})$ in the $au(\Phi^-)$-direction, that is, a sequence of alcoves $\mathfrak{b}_1 = t^{au(\mu)}(\mathfrak{a})$, $\mathfrak{b}_2, \ldots, \mathfrak{b}_{r+1} = w(\mathfrak{a})$, and affine roots $\tb_1,\tb_2,\ldots,\tb_{r} \in \Phi \times \BZ$ such that $\mathfrak{b}_{i+1} = s_{\tb_i}(\mathfrak{b}_i)$ and $\mathfrak{b}_{i+1} \in (H_{\tb_i})^{au-}$ for each $i$.

%there exists a sequence of elements $w_1 = t^{au(\mu)}, w_2, \ldots, w_{r+1}=w$, and affine roots $\tb_1,\tb_2,\ldots,\tb_{r}$ such that $w_{i+1} = s_{\tb_i}w_{i}$ and $w_{i+1}(\mathfrak{a}) \in (H_{\tb_i})^{au-}$ for each $i$. 

Since $a \in W^I$, $\Phi_I^- = \Phi_I \cap a^{-1}(\Phi^-)$. Then $au(\Phi_I^-) = aua^{-1}(\Phi_{a, I} \cap \Phi^-) = a u a^{-1} (\Phi^-_{a, I})$. Now it remains to prove that each $\tb_i$ lies in $\Phi_{a, I}\times \BZ$.

Take an arbitrary $e \in \mathfrak{a}$. Then for each $i$, $s_{\tb_i}$ sends $w_{i}(e)$ to $w_i(e) - d_i\g_i^{\vee}$
for some $d_i\in\BR_{>0}$ and $\g_i^{\vee} \in au(\Phi^{\vee, +})$. Here, $\Phi^{\vee, +}$ is the set of positive coroots. Then $t^{au(\mu)}(e) - w(e) = d_1\g_1^{\vee} + d_2\g_2^{\vee}+\cdots + d_{r}\g_r^{\vee}$. On the other hand, since $w\in W_{a, I, \aff} t^{a (\mu)}  =  W_{I, au, \aff} t^{au (\mu)} $, $t^{au(\mu)}(e) - w(e) \in au(\BR\Phi_I^{\vee})$. Here, $\Phi_I^{\vee}$ is the subroot system of $\Phi^{\vee}$ spanned by $\{\a^{\vee}\mid \a\in I\}$. This forces all $\g_i^{\vee}$ to lie in $au(\Phi_I^{\vee}) = a(\Phi_I^{\vee})$. Hence, all $\tb_i$ lie in $\Phi_{a, I} \times \BZ$, as desired.
\end{proof}

\section{Main Results}\label{sec:main}

For any $w \in \Adm(\mu)$, define $\L(w) = \{\mu' \in W_0(\mu) \mid w\le t^{\mu'}\}$.

If $w = \t_\mu$ is the length-zero element in $\Adm(\mu)$, then $\L(w) = W_0(\mu)$. If $w = t^{\mu'}$ for some $\mu' \in W_0(\mu)$, then $\L(w) = \{\mu'\}$. In general, the set $\L(w)$ is very complicated.

The set $\L(w)$ plays an important role in the study of the dual shellability of the $\mu$-admissible set in \cite[Section 4]{HY25} and \cite[Section 4]{HSY26}, where we have provided an explicit description of the set $\L(w)$ in terms of the quantum Bruhat graph. But we don't need that description in this paper.

The first main result of this paper is the following.

\begin{theorem}\label{thm:main1}
Let $I\subset \D_0$ and $a \in W^I$. Then 
\begin{align*}
\Adm(\mu)_{a, I} = \{w \in \Adm(\mu) \mid \L(w) \subset a W_I(\mu)\}.    
\end{align*}
\end{theorem}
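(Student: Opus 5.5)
The plan is to reduce both inclusions to one linear functional that cuts out the face $\CF_{a, I}$. Choose $\l \in V^*$, written as a weight, that is dominant, satisfies $\<\a^{\vee},\l\>=0$ for $\a \in I$, and satisfies $\<\a^{\vee},\l\>>0$ for $\a \in \D_0 \setminus I$. Put $f(v) = \<v, a(\l)\>$. Since $a \in W^I$, the maximum of $f$ on $W_0(\mu)$ is $f(a(\mu))=c$. I will also use that the vertices $\mu'$ with $f(\mu')=c$ are exactly those in $aW_I(\mu)$; this is the hyperplane description behind Theorem \ref{thm:Maxwell}(1). Fix $e\in\mathfrak a$. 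For $w=t^{\nu}y$ we have $w(e)=\nu+y(e)$.

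Two facts about $f$ do the work. First, every $w \in W_{a, I, \aff}t^{a(\mu)}$ satisfies $f(w(e)) = c + f(y(e))$ with $y \in W_{a, I}$. Since $W_{a, I}$ fixes $a(\l)$, this equals $c+f(e)$. Second, for $\mu'\in W_0(\mu)$ we have $f(t^{\mu'}(e)) = f(\mu')+f(e)$.

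\emph{The inclusion ``$\subset$''.} Let $w\in\Adm(\mu)_{a, I}$ and suppose $w\le t^{\mu'}$. By Lemma \ref{lem:def2}, $w \in W_{a, I, \aff}t^{a(\mu)}$, so $f(w(e)) = c+f(e)$. I want to show $f(\mu')=c$. Following the proof of Theorem \ref{thm:HH}, $w\le t^{\mu'}$ yields a reflection chain from $t^{\mu'}(\mathfrak a)$ down to $w(\mathfrak a)$. The chain lies in the $z(\Phi^-)$-direction for some $z$ with $z(\mu)=\mu'$. Hence $t^{\mu'}(e)-w(e)=\sum d_i\g_i^{\vee}$ with $d_i>0$ and $\g_i^\vee \in z(\Phi^{\vee,+})$.

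The step I expect to be the main obstacle is choosing $z$ so that $f$ is nonnegative on $z(\Phi^{\vee,+})$. This amounts to $z^{-1}a(\l)$ being dominant, i.e.\ $z\in aW_I$. In general I can only arrange $z$ with $z(\mu)=\mu'$, so I will instead use Theorem \ref{thm:HH} in a two-sided way. Membership in $\Adm(\mu)$ gives $w\in O(t^{au(\mu)},au)$ for every $u\in W_I$. On the cones $O(t^{au(\mu)},au)$ the functional $f$ is nonincreasing along the chain, and the proof of Lemma \ref{lem:def2} shows that such chains stay in $\Phi_{a, I}\times\BZ$. I would then combine this with the Bruhat chain toward $t^{\mu'}$ and the strict inequality $f(\mu')<c$ when $\mu'\notin aW_I(\mu)$. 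The aim is to show that $w(e)$ cannot have $f$-value $c+f(e)$ unless every reflection in the chain to $t^{\mu'}$ is $f$-neutral. That would force $\mu'\in t^{a(\mu)}$-translates under $W_{a, I, \aff}$, and hence $\mu'\in aW_I(\mu)$. If this two-sided argument fails, the fallback is the quantum Bruhat graph description of $\L(w)$ from \cite{HY25}.

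\emph{The inclusion ``$\supset$''.} Let $w\in\Adm(\mu)$ with $\L(w)\subset aW_I(\mu)$. By Lemma \ref{lem:def2} it suffices to show $w\in W_{a, I, \aff}t^{a(\mu)}$. Pick $\mu' = au(\mu)\in\L(w)$. By Theorem \ref{thm:HH}, $w\in O(t^{\mu'},au)$, so $t^{\mu'}(e)-w(e)=\sum d_i\g_i^\vee$ with $\g_i^\vee\in au(\Phi^{\vee,+})$. Here $f$ is nonnegative on each $\g_i^\vee$ and is positive exactly on those outside $a(\Phi_I^\vee)$. If some $\g_i^\vee$ lies outside $a(\Phi_I^\vee)$, then $f(w(e))<c+f(e)$. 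I would then show that the first such reflection lands below some $t^{\mu''}$ with $f(\mu'')<c$. This is done by walking the chain with the Bruhat/obtuse-cone exchange argument of Haines--He. It contradicts $\L(w)\subset aW_I(\mu)$. Otherwise all $\tb_i\in\Phi_{a, I}\times\BZ$, and $w\in W_{a, I, \aff}t^{\mu'}=W_{a, I, \aff}t^{a(\mu)}$, as required.
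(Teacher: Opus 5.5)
Your functional $f=\langle\,\cdot\,,a(\lambda)\rangle$ is a sound device, and your reductions are correct as far as they go: for $w\in\Adm(\mu)$, $f(w(e))=c+f(e)$ holds exactly when $w\in\Adm(\mu)_{a,I}$. But neither inclusion is actually proved.

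\textbf{The inclusion ``$\subset$''.} Note first that $t^{\mu'}(e)-w(e)\in z(\BR_{\ge0}\Phi^{\vee,+})$ for $z(\mu)=\mu'$ holds for \emph{every} $w\in\Adm(\mu)$ and \emph{every} $\mu'\in W_0(\mu)$, by Theorem~\ref{thm:HH}. So no argument that uses only cone memberships of $w$ can see the hypothesis $w\le t^{\mu'}$. Moreover, all the cones $O(t^{au(\mu)},au)$, $u\in W_I$, give the same one-sided bound $f\le c+f(e)$.

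What is missing is the local statement of Lemma~\ref{lem:key}, applied step by step along a Bruhat chain $w<s_{\tg_1}w<\cdots<t^{\mu'}$, all of whose terms lie in $\Adm(\mu)$. The statement is: if $w\in\Adm(\mu)_{a,I}$, $\tg=(\g,k)\in\tilde\Phi^+$ and $w<s_{\tg}w\in\Adm(\mu)$, then $\g\in\Phi_{a,I}$.

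Your $f$ gives only half of this. The identity $f(s_{\tg}w(e))=c+f(e)-(k+\langle w(e),\g\rangle)\langle\g^\vee,a(\lambda)\rangle$, with $k+\langle w(e),\g\rangle>0$, rules out $\langle\g^\vee,a(\lambda)\rangle<0$. To rule out $\langle\g^\vee,a(\lambda)\rangle>0$ you must test $s_{\tg}w$ against a cone indexed \emph{outside} $aW_I$, namely $O(t^{s_\g a(\mu)},s_\g a)$, i.e.\ the functional $g=\langle\,\cdot\,,s_\g a(\lambda)\rangle$. One computes $g(s_{\tg}w(e)-e)=c+(k+\langle e,\g\rangle)\langle\g^\vee,a(\lambda)\rangle$, which exceeds the bound $c$ valid on $\Adm(\mu)$. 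Then $\langle\g^\vee,a(\lambda)\rangle=0$ forces $\g\in\Phi_{a,I}$ by your choice of $\lambda$. This two-cone test is exactly the paper's proof of Lemma~\ref{lem:key}. Your ``two-sided'' paragraph states the goal but not this mechanism.

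\textbf{The inclusion ``$\supset$''.} Here the gap is more serious. Your reduction is fine: it suffices to show that $f(w(e))<c+f(e)$ forces some $\mu''\in\Lambda(w)$ with $f(\mu'')<c$. But the claim that ``the first such reflection lands below some $t^{\mu''}$'' has no justification. It would also not transfer to $w$: the chain witnessing $w\in O(t^{\mu'},au)$ descends in $\le_{\mathfrak b}$ for $\mathfrak b$ deep in $au(C^-)$, not in the Bruhat order $\le$, and nothing guarantees its intermediate alcoves lie in $\Adm(\mu)$.

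This direction needs genuine structural input about $\Adm(\mu)$. The paper argues by downward induction on $\ell(w)$, which reduces to the case where every $w'>w$ in $\Adm(\mu)$ already lies in $\Adm(\mu)_{a,I}$. It then uses that a non-maximal element of $\Adm(\mu)$ has two distinct upper covers $s_{\tg_1}w\ne s_{\tg_2}w$ in $\Adm(\mu)$ (\cite[Proposition 3.14 (2)]{HSY26}, \cite[Proposition 8.7]{Ha05}). Lemma~\ref{lem:a1a2} then shows that two distinct positive affine reflections with $\g_i\notin\Phi_{a,I}$ cannot both carry $w$ into $W_{a,I,\aff}t^{a(\mu)}$.

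A single cover would give no contradiction, so this multiplicity is the essential point, and nothing in your proposal supplies it. The quantum Bruhat graph fallback is a pointer, not an argument.
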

The proof of Theorem \ref{thm:main1} will be given in \S\ref{sec:proof}.

The following result is a direct consequence of Theorem \ref{thm:main1}. Recall that $\CF_{a, I}$ is the face of $\CP_{\mu}$ with the set of vertices $a W_I(\mu)$.

\begin{proposition}\label{prop:face-corr}
Let $I,I' \subset\D_0$ and $a \in W^I, a' \in W^{I'}$. The following holds true. 
\begin{enumerate}
\item We have $\Adm(\mu)_{a, I} = \Adm(\mu)_{I', a'}$ if and only if $\CF_{a, I} = \CF_{I', a'}$.

\item If $\CF_{a, I} \cap \CF_{I', a'} =\emptyset$, then $\Adm(\mu)_{a, I} \cap \Adm(\mu)_{I', a'} = \emptyset$. If $\CF_{a, I} \cap \CF_{I', a'} = \CF_{I'', a''}$ for some $I''\subset\D_0$ and $a'' \in W^{I''}$, then $\Adm(\mu)_{a, I} \cap \Adm(\mu)_{I', a'} = \Adm(\mu)_{I'', a''}$.
\end{enumerate}
\end{proposition}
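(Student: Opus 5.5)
The plan is to derive both parts directly from Theorem \ref{thm:main1}, combined with Maxwell's Theorem \ref{thm:Maxwell}. The idea is that Theorem \ref{thm:main1} describes each face $\Adm(\mu)_{a, I}$ purely in terms of its vertex set $aW_I(\mu)$:
\begin{align*}
\Adm(\mu)_{a, I} = \{w \in \Adm(\mu) \mid \L(w) \subset aW_I(\mu)\}.
\end{align*}
Moreover, by Theorem \ref{thm:Maxwell}(1), $aW_I(\mu)$ is exactly the vertex set of $\CF_{a, I}$. Hence equality of faces of $\CP_{\mu}$ should translate into equality of these vertex subsets, and that in turn into equality of the faces of $\Adm(\mu)$.

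\textbf{Part (1).} If $\CF_{a, I} = \CF_{a', I'}$, then the vertex sets agree, so $aW_I(\mu) = a'W_{I'}(\mu)$. The description above then gives $\Adm(\mu)_{a, I} = \Adm(\mu)_{a', I'}$. For the converse, I use the elements $t^{\mu'}$. For $\mu' \in W_0(\mu)$, we have $\L(t^{\mu'}) = \{\mu'\}$; this is stated in \S\ref{sec:main} and follows since distinct translations $t^{\mu'}, t^{\mu''}$ of the same length are incomparable. So $t^{\mu'} \in \Adm(\mu)_{a, I}$ if and only if $\mu' \in aW_I(\mu)$. Hence equality of the admissible faces forces $aW_I(\mu) = a'W_{I'}(\mu)$. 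Taking convex hulls gives $\CF_{a, I} = \CF_{a', I'}$.

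\textbf{Part (2).} Intersecting the two descriptions gives
\begin{align*}
\Adm(\mu)_{a, I} \cap \Adm(\mu)_{a', I'} = \{w \in \Adm(\mu) \mid \L(w) \subset aW_I(\mu) \cap a'W_{I'}(\mu)\}.
\end{align*}
Note that $\L(w) \neq \emptyset$ for every $w \in \Adm(\mu)$, by the definition of $\Adm(\mu)$.

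Suppose first that $\CF_{a, I} \cap \CF_{a', I'} = \emptyset$. Then the vertex sets are disjoint, so no $w$ qualifies and the intersection is empty.

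Suppose instead that $\CF_{a, I} \cap \CF_{a', I'} = \CF_{a'', I''}$. I need the vertex-set identity
\begin{align*}
aW_I(\mu) \cap a'W_{I'}(\mu) = a''W_{I''}(\mu).
\end{align*}
For this, note that the vertices of $\CP_{\mu}$ lying in a face $\CF$ are exactly the vertices of $\CF$, since a face is the intersection of $\CP_{\mu}$ with a supporting hyperplane, or $\CP_{\mu}$ itself. Applying this to $\CF_{a, I}$, $\CF_{a', I'}$ and $\CF_{a'', I''}$:
\begin{itemize}
\item the common vertices of $\CF_{a, I}$ and $\CF_{a', I'}$ are the vertices of $\CP_{\mu}$ lying in $\CF_{a, I} \cap \CF_{a', I'}$;
\item these are exactly the vertices of $\CF_{a'', I''}$, which by Theorem \ref{thm:Maxwell}(1) form the set $a''W_{I''}(\mu)$.
\end{itemize}
Applying Theorem \ref{thm:main1} to $(a'', I'')$ then yields $\Adm(\mu)_{a'', I''}$.

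The only real subtlety I expect is this last step: checking that vertices of a face intersection are the common vertices. It follows from the face-of-face property of polytopes, and is also consistent with Theorem \ref{thm:Maxwell}(3). Everything substantive is carried by Theorem \ref{thm:main1}.
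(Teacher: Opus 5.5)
Your proposal is correct and follows essentially the paper's route: both parts are deduced from Theorem~\ref{thm:main1}. For the converse in (1) you detect the vertex set through the translations $t^{\mu'}$, which is the same idea as the paper's use of the maximal elements of $\Adm(\mu)_{a, I}$. Your check that $aW_I(\mu)\cap a'W_{I'}(\mu)=a''W_{I''}(\mu)$ (the vertices of a face are the vertices of $\CP_{\mu}$ lying in it) and your use of $\L(w)\neq\emptyset$ for the disjoint case correctly supply the details the paper leaves implicit in part (2).
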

\begin{proof}
\begin{enumerate}
\item The ``if" direction follows from Theorem \ref{thm:main1}. Suppose $\Adm(\mu)_{a, I} = \Adm(\mu)_{I', a'}$. Note that the set of maximal elements of $\Adm(\mu)_{a, I}$ is $\{t^{\mu'} \mid \mu' \in aW_I(\mu)\}$. Then $a W_I(\mu) = a' W_{I'}(\mu)$. Therefore, we have $\CF_{a, I} = \CF_{I', a'}$ as desired.
    
\item The statements follow from Theorem \ref{thm:main1}. 
\end{enumerate}
\end{proof}

\begin{definition}
Let $\CF$ be a face of the polytope $\CP_{\mu}$. Define $\Adm(\mu)_{\CF} = \Adm(\mu)_{a,I}$ for any $I\subset\D_0$ and $a\in W^I$ with $\CF = \CF_{a,I}$. This is well-defined by Proposition \ref{prop:face-corr} (1). 
    
\end{definition}

We now define the notions of interior, boundary, and the center of a face $\Adm(\mu)_{\mu}$, which play important roles in analyzing the face map.

Let $\CF$ be a face of the polytope $\CP_{\mu}$. Define
\begin{align*}
\Adm(\mu)_{\CF}^{\circ} = \Adm(\mu)_{\CF} - \bigcup_{\CF_1 \subsetneq \CF} \Adm(\mu)_{\CF_1}.
\end{align*}
Here, $\CF_1$ runs over all faces of $\CP_{\mu}$ that are properly contained in $\CF$. We call $\Adm(\mu)_{\CF}^{\circ}$ the \emph{interior} of the face $\Adm(\mu)_{\CF}$. This is the analogue of the topological interior of the face $\CF$. We call $\Adm(\mu)_{\CF}-\Adm(\mu)_{\CF}^{\circ}$ the \emph{boundary} of $\Adm(\mu)_{\CF}$. %Denote $\Adm(\mu)^{\circ} = \Adm(\mu)_{\CP_{\mu}}^{\circ}$ for simplicity.

Let $\CF$ be a face of $\CP_{\mu}$. Choose $I\subset\D_0$ and $a \in W^I$ such that $\CF = \CF_{a, I}$. Let $c_{a, I}$ be the length-zero element in $\Adm^{\tW_{a, I}}(a(\mu))$. Then $c_{a, I}$ is the unique minimal element in $\Adm(\mu)_{a, I}$ with respect to the Bruhat order $\le_{a, I}$ of $\tW_{a, I}$. By \cite[Corollary 3.4]{Dyer90}, it is also the unique minimal element with respect to the Bruhat order $\le$ of $\tW$. In particular, $c_{a, I}$ is independent of the choice of the pair $(I, a)$. We denote $c_{\CF} = c_{a, I}$. 

By Theorem \ref{thm:main1}, $c_{\CF}$ does not lie in $\Adm(\mu)_{\CF_1}$ for any smaller face $\CF_1 \subsetneq \CF$. In other words, we have
\begin{align}\label{eq:cF}
    c_{\CF} \in \Adm(\mu)_{\CF}^{\circ}.
\end{align}
We call $c_{\CF}$ the \emph{center} of the face $\Adm(\mu)_{\CF}$. This is the analogue of the barycenter of the face $\CF$ of $\CP_{\mu}$.

For minimal faces $\CF = \{\mu'\}$, $\mu' \in W_0(\mu)$, we have $c_{\CF} = t^{\mu'}$. If $\CF=\CP_{\mu}$ is the maximal face, then $c_{\CF} = \t_{\mu}$, the length-zero element in $\Adm(\mu)$. Figure 5 is an example in the case of type $A_2$ and $\mu = \omega_1^{\vee} + \omega_2^{\vee}$. Figure 6 is an example in the case of type $C_2$ and $\mu = \omega_2^{\vee}$. In both cases, the boundary consists of the alcoves shaded in light or dark gray. The centers of the ``1-dimensional" faces are shaded in dark gray.

\newpage
\begin{figure}[htbp]
\centering
\begin{minipage}{0.45\textwidth}
\centering
\begin{tikzpicture}[scale=0.7]

\draw[fill=lightgray](0,-2)--(-0.5773502691896257,-1)--(0.5773502691896257,-1)--cycle;
\draw[fill=lightgray](0,-2)--(-0.5773502691896257,-1)--
(-1.7320508075688774,-1)--
(-1.1547005383792515,0)--
(-0.5773502691896257,-1)--
(0.5773502691896257,-1)--
(1.7320508075688774,-1)--
(1.1547005383792515,-2)--
cycle;
\draw[fill=lightgray]
(-1.1547005383792515,0)--
(-1.7320508075688774,1)--
(-1.1547005383792515,2)--
(0,2)--
(0.5773502691896257,3)--
(1.1547005383792515,2)--
(0,2)--
(-0.5773502691896257,1)--
(-1.1547005383792515,0);
\draw[fill=lightgray]
(1.1547005383792515,2)--
(2.309401076758503,2)--
(2.8867513459481287,1)--
(2.309401076758503,0)--
(2.8867513459481287,-1)--
(1.7320508075688774,-1)--
(2.309401076758503,0)--cycle;
\draw[fill=darkgray]
(-0.5773502691896257,1)--
(-1.1547005383792515,2)--
(0,2)--cycle;
\draw[fill=darkgray]
(1.1547005383792515,2)--
(2.309401076758503,2)--
(1.7320508075688774,1)--cycle;
\draw[fill=darkgray]
(0.5773502691896257,-1)--
(1.7320508075688774,-1)--
(1.1547005383792515,-2)--cycle;
\draw[fill=darkgray]
(-1.1547005383792515,0)--
(-1.7320508075688774,1)--
(-0.5773502691896257,1)--cycle;
\draw[fill=darkgray]
(-0.5773502691896257,-1)--
(0.5773502691896257,-1)--
(0,-2)--cycle;
\draw[fill=darkgray]
(1.7320508075688774,1)--
(2.8867513459481287,1)--
(2.309401076758503,0)--cycle;

-3.464101615137755
-2.8867513459481287
-2.309401076758503
-1.7320508075688774
-1.1547005383792515
-0.5773502691896257
0.0
0.5773502691896257
1.1547005383792515
1.7320508075688774
2.309401076758503
2.8867513459481287
3.464101615137755
4.041451884327381
4.618802153517006

\begin{scope}
\clip (-3.5, -3.5) rectangle (4.5, 4.5);
\foreach \i in {-4.618802153517006, -3.464101615137755, -2.309401076758503, -1.1547005383792515, 0.0, 1.1547005383792515, 2.309401076758503, 3.464101615137755, 4.618802153517006} {
\draw[black, thin] (\i,0) -- (\i+ 2.309401076758503, 4);
\draw[black, thin] (\i,0) -- (\i-2.309401076758503,-4);
\draw[black, thin] (\i,0) -- (\i-2.309401076758503,4);
\draw[black, thin] (\i,0) -- (\i+2.309401076758503,-4);
}
\foreach \k in {-4,-3,...,4} {
\draw[gray, thin] (-10, \k) -- (10, \k);
}
\end{scope}
\draw[line width=1mm] 
(-1.1547005383792515,0)--
(-1.7320508075688774,1)--
(-1.1547005383792515,2)--
(0,2)--
(0.5773502691896257,3)--
(1.1547005383792515,2)--
(2.309401076758503,2)--
(2.8867513459481287,1)--
(2.309401076758503,0)--
(2.8867513459481287,-1)--
(1.7320508075688774,-1)--
(1.1547005383792515,-2)--
(0,-2)--
(-0.5773502691896257,-1)--
(-1.7320508075688774,-1)
--cycle;
\draw[fill, black] (0,0) circle [radius=.11cm];
\end{tikzpicture}
\caption{}
\end{minipage}%
\hfill
\begin{minipage}{0.45\textwidth}
\centering
\begin{tikzpicture}[scale=0.4]
\draw[fill=lightgray](0,0)--(-2,2)--(0,4)--(2,2)--(0,2)--cycle;

\draw[fill=lightgray](2,2)--(4,4)--(4,0)-- cycle;
\draw[fill=lightgray]
(0,0)--(-2,-2)--(4,-2)--(4,0)--(2,-2)--cycle;

\draw[fill=darkgray]
(0,0)--(-2,2)--(0,2)--cycle;
\draw[fill=darkgray]
(0,2)--(0,4)--(2,2)--cycle;
\draw[fill=darkgray]
(2,2)--(4,2)--(4,0)--cycle;
\draw[fill=darkgray]
(0,0)--(2,-2)--(0,-2)--cycle;

\draw[line width=1mm] (-2,2)--(0,4)--(2,2)--(4,4)--(4,-2)--(-2,-2)--(0,0)-- cycle;
\draw[fill, black] (0,0) circle [radius=.23cm];

\begin{scope}
\clip (-6.5, -6.5) rectangle (8, 7.5);
\foreach \i in {-8,-6,...,8} {
\draw[black, thin] (-10,\i) -- (10,\i);
\draw[black, thin] (\i,-10) -- (\i,10);
}
\foreach \k in {-4,-3,...,4} {
\draw[black, thin] (-10, {-10 + 4*\k}) -- (10, {10 + 4*\k});
\draw[black, thin] (-10, {10 - 4*\k}) -- (10, {-10 - 4*\k});
}
\end{scope}

\end{tikzpicture}
\caption{}
\end{minipage}
\end{figure}

%In the rest of this section, we shall use the notions of interior, boundary, and center elements to analyze the face map.

Pappas-Rapoport \cite[(4.1.3)]{PR26} defined the \emph{face map}
\begin{align*}
|\D|^f:\Adm(\mu)\longrightarrow \mathscr{F}(\CP_\mu)
\end{align*}
as follows. For $w \in \Adm(\mu)$, define $|\D|^f(w)$ as the smallest face of $\CP_{\mu}$ containing $\L(w) = \{\mu' \in W_0(\mu) \mid w \le t^{\mu'}\}$.

It is clear that the face map $|\D|^f$ is order reversing, that is, if $w \le w'$ in $\Adm(\mu)$, then $|\D|^{f}(w) \supset |\D|^f(w')$. For $\mu' \in W_0(\mu)$, $|\D|^f(t^{\mu'})$ is the zero-dimensional face $\{\mu'\}$. If $\t_{\mu}$ is the unique length-zero element in $\Adm(\mu)$, then $|\D|^{f}(\t_{\mu})$ is the maximal face $ \CP_{\mu}$.

Pappas-Rapoport conjectured that the face map $|\D|^f$ is surjective and coincides with the map $|\D|$ induced from the divisor map (see the introduction).

Our second main result gives an explicit description of the face map $|\D|^f$.
\begin{theorem}\label{thm:main2}
The following statements are true.
\begin{enumerate}
\item For any face $\CF$ of $\CP_{\mu}$, the fiber of the face map $(|\D|^f)^{-1}(\CF)$ is equal to $\Adm(\mu)_{\CF}^{\circ}$, the interior of the face $\Adm(\mu)_{\CF}$. 

\item There is a decomposition
\begin{align*}
    \Adm(\mu) = \bigsqcup_{\CF} \Adm(\mu)_{\CF}^{\circ}.
\end{align*}
Here, $\CF$ runs over all faces of $\CP_{\mu}$. For $w \in \Adm(\mu)$, $|\D|^f(w)$ is the unique face $\CF$ such that $w \in \Adm(\mu)_{\CF}^{\circ}$.

\item The face map $|\D|^f$ is surjective.
\end{enumerate}
\end{theorem}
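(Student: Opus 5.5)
Theorem \ref{thm:main2} follows from Theorem \ref{thm:main1} together with Maxwell's description of the face poset (Theorem \ref{thm:Maxwell}) and the center property (\ref{eq:cF}). The plan is to first identify, for each face $\CF$ and each $w \in \Adm(\mu)$, the equivalence
\begin{align*}
w \in \Adm(\mu)_{\CF} \iff \L(w) \subset \CF \iff |\D|^f(w) \subset \CF.
\end{align*}
The first equivalence is Theorem \ref{thm:main1}, after writing $\CF = \CF_{a,I}$ and noting that $\CF \cap W_0(\mu) = aW_I(\mu)$. This uses Theorem \ref{thm:Maxwell}(1): the vertices of $\CF_{a,I}$ are exactly $aW_I(\mu)$, and since every point of $W_0(\mu)$ is a vertex of $\CP_\mu$, a point of $W_0(\mu)$ lies in a face iff it is a vertex of that face. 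The second equivalence is immediate from the definition of $|\D|^f(w)$ as the smallest face containing $\L(w)$. The smallest such face exists because faces are closed under intersection (Theorem \ref{thm:Maxwell}(3)) and $\L(w) \neq \emptyset$ since $w \in \Adm(\mu)$.

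For (1), let $w \in \Adm(\mu)$ and set $\CF_0 = |\D|^f(w)$.
\begin{itemize}
\item Suppose $|\D|^f(w) = \CF$. By the equivalence, $w \in \Adm(\mu)_{\CF}$. If $w \in \Adm(\mu)_{\CF_1}$ for some $\CF_1 \subsetneq \CF$, then again by the equivalence $\CF \subset \CF_1$, a contradiction. Hence $w \in \Adm(\mu)_{\CF}^{\circ}$.
\item Conversely, suppose $w \in \Adm(\mu)_{\CF}^{\circ}$. Then $\CF_0 \subset \CF$. Since $w \in \Adm(\mu)_{\CF_0}$ by the equivalence (applied with $\CF_0$ in place of $\CF$), interiority forces $\CF_0 = \CF$.
\end{itemize}

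For (2), the sets $\Adm(\mu)_{\CF}^{\circ}$ are exactly the fibers of the map $|\D|^f$ by (1). Fibers of a map partition its domain, which gives the disjoint decomposition and the uniqueness claim.

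For (3), surjectivity follows from (\ref{eq:cF}): the center $c_{\CF}$ lies in $\Adm(\mu)_{\CF}^{\circ}$, so by (1) we get $|\D|^f(c_{\CF}) = \CF$. (\ref{eq:cF}) is itself a consequence of Theorem \ref{thm:main1}, but it may be cleaner to verify it directly here. Since $c_{\CF}$ is the minimal element of $\Adm(\mu)_{a,I}$, we have $c_{\CF} \le_{a,I} t^{\mu'}$ for all $\mu' \in aW_I(\mu)$. By Dyer's result this gives $c_{\CF} \le t^{\mu'}$, so $\L(c_{\CF}) \supset aW_I(\mu)$, and the smallest face containing it is $\CF$ (using Theorem \ref{thm:Maxwell}(1)). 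Combined with $\L(c_{\CF}) \subset \CF$ from Theorem \ref{thm:main1}, this yields $|\D|^f(c_{\CF}) = \CF$.

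There is no serious obstacle once Theorem \ref{thm:main1} is available; the only point needing care is the identification $\CF_{a,I} \cap W_0(\mu) = aW_I(\mu)$, which we take from Theorem \ref{thm:Maxwell}(1).
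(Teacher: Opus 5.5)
Your proposal is correct and follows the same route as the paper: (1) is reduced to Theorem \ref{thm:main1} via the definitions of $|\D|^f$ and of the interior, (2) follows because the fibers of $|\D|^f$ partition $\Adm(\mu)$, and (3) follows from the center lying in the interior, $c_{\CF} \in \Adm(\mu)_{\CF}^{\circ}$. You also spell out two points the paper uses implicitly: the identification $\CF_{a,I}\cap W_0(\mu) = aW_I(\mu)$, and the existence of a smallest face containing $\L(w)$.
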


\begin{proof}
\begin{enumerate}
\item By definition, for a face $\CF$ of $\CP$, the fiber $(|\D|^f)^{-1}(\CF)$ is equal to the set of $w\in\Adm(\mu)$ such that $\L(w)$ is contained in $\CF$ but is not contained in any smaller face. Then the statement follows from Theorem \ref{thm:main1} and the definition of $\Adm(\mu)_{\CF}^{\circ}$.

\item Note that $\Adm(\mu) = \bigsqcup_{\CF} (|\D|^f)^{-1}(\CF)$, where $\CF$ runs over all faces of $\CP_{\mu}$. Then the statement follows from (1).

\item By (1), it suffices to prove that $\Adm(\mu)_{\CF}^{\circ}$ is non-empty. This follows from the fact that the center element $c_{\CF}$ lies in $\Adm(\mu)_{\CF}^{\circ}$ (cf. (\ref{eq:cF})). 
\end{enumerate}
\end{proof}

Note that the face map $|\D|^f$ is not injective in general, since the interior $\Adm(\mu)_{\CF}^{\circ}$ may be large, see Figures 5, 6.

\begin{remark}
The decomposition in Theorem \ref{thm:main2} (2) is closely related to the Hodge--Newton decomposition 
\begin{align*}
    \Adm(\mu, b) = \bigsqcup_{P' = M' N' \in \mathscr{P}^{\s}} \Adm^{M}(z_{P'}(\mu),b_{P'})
\end{align*}
in \cite[Theorem 4.11 (1)]{GHN2}. Note also that the interior $\Adm(\mu)^{\circ}$ is closely related to the Hodge--Newton indecomposable Newton strata. 
\end{remark}

Finally, we give an alternative description of the face map $|\D|^f$.
\begin{proposition}
The face map $|\D|^f$ is the unique map from $\Adm(\mu)$ to the face poset of $\CP_{\mu}$ satisfying the following two properties.
\begin{enumerate}
\item $|\D|^f$ is order-reversing. That is, if $w < w'$, then $|\D|^f(w)\supset |\D|^f(w')$.
\item For any face $\CF$ of $\CP_{\mu}$, $|\D|^f$ sends the center element $c_{\CF}$ to $\CF$. 
\end{enumerate}
\end{proposition}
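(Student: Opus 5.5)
We must show two things: $|\D|^f$ satisfies (1) and (2), and any map $\phi:\Adm(\mu)\to\mathscr{F}(\CP_\mu)$ with these properties equals $|\D|^f$.

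\emph{Existence.} Property (1) holds because $w\le w'$ and $w'\le t^{\mu'}$ give $w\le t^{\mu'}$, so $\L(w')\subset\L(w)$. For property (2), the centre satisfies $c_{\CF}\in\Adm(\mu)_{\CF}^{\circ}$ by (\ref{eq:cF}). Theorem \ref{thm:main2}(1) then gives $|\D|^f(c_{\CF})=\CF$.

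\emph{Uniqueness.} Let $\phi$ satisfy (1) and (2), and fix $w\in\Adm(\mu)$. By Theorem \ref{thm:main2}(2) there is a unique face $\CF$ with $w\in\Adm(\mu)_{\CF}^{\circ}$, and $|\D|^f(w)=\CF$. It suffices to show $\phi(w)=\CF$, and we get this from two inclusions.

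\emph{Lower bound.} Since $c_{\CF}$ is the unique minimal element of $\Adm(\mu)_{\CF}$ for $\le$, we have $c_{\CF}\le w$. Then (1) and (2) give $\phi(w)\subset\phi(c_{\CF})=\CF$.

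\emph{Upper bound.} This is the main step. The set $\L(w)$ lies in $\CF$ and, by Theorem \ref{thm:main1}, in no proper subface of $\CF$. For each $\mu'\in\L(w)$ we have $w\le t^{\mu'}$, and $t^{\mu'}=c_{\{\mu'\}}$. So (1) and (2) give $\phi(w)\supset\phi(t^{\mu'})=\{\mu'\}$. Hence $\phi(w)$ contains $\L(w)$, and being a face, it contains the smallest face containing $\L(w)$, which is $|\D|^f(w)=\CF$. Combining both inclusions, $\phi(w)=\CF$.

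The argument uses only that the vertices $t^{\mu'}$ are centres of the $0$-dimensional faces, together with Theorem \ref{thm:main1} and the minimality of $c_{\CF}$ in $\Adm(\mu)_{\CF}$. The one point to check is that (1) is applied correctly when $w=w'$. In that case both sides are equal, so non-strict monotonicity holds and nothing further is needed.
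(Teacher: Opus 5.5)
Your proof is correct and is essentially the paper's argument: you get $\phi(w)\subset\CF$ from $c_{\CF}\le w$, and $\phi(w)\supset\CF$ from $w\le t^{\mu'}$ with $t^{\mu'}=c_{\{\mu'\}}$. The only cosmetic difference is in the upper bound, which you deduce directly from the definition of $|\D|^f(w)$ as the smallest face containing $\L(w)$, whereas the paper argues by contradiction, using that $w\in\Adm(\mu)_{\CF}^{\circ}$ forces some $\mu'\in\L(w)$ outside any proper subface.
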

\begin{proof}
Let $|\D|'$ be a map satisfying the above two properties. We shall prove that $|\D|'=|\D|^f$. Let $w \in \Adm(\mu)$. By Theorem \ref{thm:main2}, $w$ lies in the interior $\Adm(\mu)_{\CF}^{\circ}$, where $\CF = |\D|^f(w)$. It suffices to prove that $|\D|'(w) = \CF$. 

Note that $c_{\CF}$ is the unique minimal element in $\Adm(\mu)_{\CF}$ with respect to $\le$. Then $w \ge c_{\CF}$. By properties (1), (2), we have $|\D|'(w) \subset |\D|'(c_{\CF}) = \CF$. Suppose $|\D|'(w) = \CF_1$ for some smaller face $\CF_1 \subsetneq \CF$. Since $w\in\Adm(\mu)_{\CF}^{\circ}$, $w \le t^{\mu'}$ for some $\mu' \in \CF - \CF_1$. Again, by properties (1), (2), $|\D|'(w) \supset |\D|'(t^{\mu'}) = \{\mu'\}$. A contradiction. Hence $|\D|'(w) = \CF$ as desired.
\end{proof}

\section{Proof of Theorem \ref{thm:main1}}\label{sec:proof}

We need some preparations. Recall that $\Phi_{a, I}$ is the subroot system of $\Phi$ spanned by $\{a(\a) \mid \a\in J\}$.

\begin{lemma}\label{lem:HH}
\begin{enumerate}
\item For any $w \in \Adm(\mu)$, $z \in W_0$, and $e\in \mathfrak{a}$, we have $w(e) - e = z(\mu - \l)$ for some $\l \in \BR_{\ge0}\Phi^{\vee,+}$. Here, $\Phi^{\vee,+}$ is the set of positive coroots.
\item Let $I\subset\D_0$ and $a \in W^I$. For any $w \in \Adm(\mu)_{a, I}$, $u \in W_I$, and $e\in \mathfrak{a}$, we have $w(e) - e = au(\mu -  \l)$ for some $\l \in \BR_{\ge0}\Phi_I^{\vee,+}$. Here, $\Phi_I^{\vee,+} = \Phi_I^{\vee} \cap \Phi^{\vee,+}$.
\end{enumerate}
\end{lemma}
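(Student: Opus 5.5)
Both parts come from Theorem \ref{thm:HH}: we track the displacement of a point $e\in\mathfrak{a}$ along a chain of alcoves inside a combinatorial obtuse cone. For part (1), fix $w\in\Adm(\mu)$, $z\in W_0$ and $e\in\mathfrak{a}$. By Theorem \ref{thm:HH}, $w\in O(t^{z(\mu)},z)$. So there is a chain of alcoves $\mathfrak{b}_1=t^{z(\mu)}(\mathfrak{a}),\ldots,\mathfrak{b}_{r+1}=w(\mathfrak{a})$ with $\mathfrak{b}_{i+1}=s_{\tb_i}(\mathfrak{b}_i)$ and $\mathfrak{b}_{i+1}\in (H_{\tb_i})^{z-}$.

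Write $w_i\in W_{\aff}t^{z(\mu)}$ for the element with $w_i(\mathfrak{a})=\mathfrak{b}_i$; here $w_{i+1}=s_{\tb_i}w_i$. The reflection $s_{\tb_i}$ moves $w_i(e)$ perpendicularly across $H_{\tb_i}$, into the half-space $(H_{\tb_i})^{z-}$. Hence
$$w_{i+1}(e)=w_i(e)-d_i\g_i^{\vee}, \qquad d_i\in\BR_{>0},\quad \g_i^{\vee}\in z(\Phi^{\vee,+}).$$
This is exactly the computation already used in the proof of Lemma \ref{lem:def2}. To justify the sign: $(H_{\tb_i})^{z-}$ is the side containing deep points of $z(C^-)$, i.e.\ the side in the direction of $-z(\Phi^{\vee,+})$. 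Since $s_{\tb_i}$ sends the point $w_i(e)$ to the opposite side of $H_{\tb_i}$, and the image lies in $(H_{\tb_i})^{z-}$ while the original does not, the displacement is a positive multiple of $-\g_i^{\vee}$ for the coroot $\g_i^{\vee}=\pm\b_i^{\vee}$ that lies in $z(\Phi^{\vee,+})$.

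Summing over the chain gives $w(e)=t^{z(\mu)}(e)-\sum_i d_i\g_i^{\vee}=e+z(\mu)-z(\l)$, where $\l=\sum_i d_i z^{-1}(\g_i^{\vee})\in\BR_{\ge0}\Phi^{\vee,+}$. Therefore $w(e)-e=z(\mu-\l)$, which is part (1).

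For part (2), take $w\in\Adm(\mu)_{a,I}$ and $u\in W_I$. We run the same argument inside $\tW_{a,I}$, using the expression of $\Adm^{\tW_{a,I}}(a(\mu))$ as an intersection of the cones $O^{\tW_{a,I}}(t^{au(\mu)},aua^{-1})$, as in the proof of Lemma \ref{lem:def2}. This gives a chain from $t^{au(\mu)}(\mathfrak{a})$ to $w(\mathfrak{a})$ with affine roots $\tb_i\in\Phi_{a,I}\times\BZ$, moving in the $aua^{-1}(\Phi^-_{a,I})$-direction. By the same sign argument, each step subtracts $d_i\g_i^{\vee}$ with $d_i>0$ and
$$\g_i^{\vee}\in aua^{-1}(\Phi^{\vee,+}_{a,I})=au(\Phi_I^{\vee,+}).$$
The last equality holds because $a\in W^I$ gives $a(\Phi_I^+)=\Phi^+_{a,I}$. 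Summing over the chain, $w(e)-e=au(\mu)-\sum_i d_i\g_i^{\vee}=au(\mu-\l)$ with $\l\in\BR_{\ge0}\Phi_I^{\vee,+}$.

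Alternatively, one can start from the $\tW$-level chain for $O(t^{au(\mu)},au)$ and use the argument at the end of the proof of Lemma \ref{lem:def2}. Since $w\in W_{a,I,\aff}t^{a(\mu)}$, the total displacement lies in $au(\BR\Phi_I^{\vee})$, and this forces every $\g_i^{\vee}$ into $au(\Phi_I^{\vee})$. Combined with $\g_i^{\vee}\in au(\Phi^{\vee,+})$, this gives $\g_i^{\vee}\in au(\Phi_I^{\vee,+})$ again.

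The main obstacle is the sign bookkeeping in part (1): checking that $\mathfrak{b}_{i+1}\in(H_{\tb_i})^{z-}$ really forces the displacement to lie in $-\BR_{>0}\,z(\Phi^{\vee,+})$, rather than the opposite cone. Everything else is summation along the chain.
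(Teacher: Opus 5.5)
Your proposal is correct and follows the paper's route. Part (1) is the displacement computation obtained by unpacking Theorem \ref{thm:HH} and the definition of $O(t^{z(\mu)},z)$, and part (2) is (1) run inside $\tW_{a,I}$ with $z = aua^{-1}$, followed by $a^{-1}(\Phi^{\vee,+}_{a,I}) = \Phi_I^{\vee,+}$. Your alternative for (2) is also valid and avoids applying Theorem \ref{thm:HH} to the Levi. It even follows directly from (1) with $z = au$: since $w \in W_{a,I,\aff}t^{au(\mu)}$, the vector $\l$ from (1) lies in $\BR\Phi_I^{\vee} \cap \BR_{\ge0}\Phi^{\vee,+} = \BR_{\ge0}\Phi_I^{\vee,+}$.
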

\begin{proof}
(1) follows from Theorem \ref{thm:HH} and the definition of $O(t^{z(\mu)},z)$. We now prove (2). Note that $\Adm(\mu)_{a, I} = \Adm^{\tW_{a, I}}(a(u))$. Apply (1) to $\Adm^{\tW_{a, I}}(a(u))$, we get $w(e) -e = a u a^{-1}(a(\mu) - \l')$ for some $\l' \in \BR_{\ge0}\Phi_{a, I}^{\vee, +}$. Here, $\Phi_{a, I}^{\vee, +} = \Phi_{a, I}^{\vee} \cap \Phi^{\vee,+}$. Since $a \in W^I$, we have $a^{-1}(\Phi_{a, I}^{\vee,+}) = \Phi_I^{\vee,+}$. Then $w(e) - e = a u(\mu-\l)$ for some $\l \in \BR_{\ge0}\Phi_I^{\vee,+}$.
\end{proof}

\begin{lemma}\label{lem:key}
Let $I\subset\D_0$ and $a \in W^I$. Let $w \in \Adm(\mu)_{a, I}$ and $\tilde\g  \in \tilde{\Phi}^+$. Suppose $w < s_{\tilde\g} w$ and $s_{\tilde\g} w \in \Adm(\mu)$. Then $\g \in \Phi_{a, I} \times \BZ$.
\end{lemma}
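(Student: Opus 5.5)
The plan is to detect the root $\g$ with a linear functional that picks out the face $\CF_{a,I}$ of $\CP_\mu$. I read the conclusion as $\tilde\g\in\Phi_{a,I}\times\BZ$, i.e.\ $\g\in\Phi_{a,I}$, equivalently $a^{-1}(\g)\in\Phi_I$.

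\textbf{Step 1 (the functional).} Choose a dominant weight $\chi$ (over $\BR$) with $\<\a^\vee,\chi\>=0$ for $\a\in I$ and $\<\a^\vee,\chi\>>0$ for $\a\in\D_0-I$. Put $\varphi(v)=\<v,a(\chi)\>$. Then for $u\in W_I$ we have $\varphi(au(v))=\<v,\chi\>$. Moreover $\varphi$ kills $a(\BR\Phi_I^\vee)$ and is strictly positive on $a(\BR_{\ge0}\Phi^{\vee,+}-\BR\Phi_I^\vee)$.

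\textbf{Step 2 (the two sides).} Fix $e\in\mathfrak a$.
\begin{itemize}
\item Since $w\in\Adm(\mu)_{a,I}$, Lemma \ref{lem:HH}(2) gives $w(e)-e=a(\mu-\l)$ with $\l\in\BR_{\ge0}\Phi_I^{\vee,+}$. Hence $\varphi(w(e)-e)=\<\mu,\chi\>=:M$.
\item Since $s_{\tg}w\in\Adm(\mu)$, Lemma \ref{lem:HH}(1) with $z=a$ gives $s_{\tg}w(e)-e=a(\mu-\l')$ with $\l'\in\BR_{\ge0}\Phi^{\vee,+}$. Hence $\varphi(s_{\tg}w(e)-e)\le M$, with equality iff $\l'\in\BR\Phi_I^\vee$.
\end{itemize}

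\textbf{Step 3 (the length condition).} Write $\tg=(\g,k)$, so that $s_{\tg}(v)=v-(k+\<v,\g\>)\g^\vee$. The condition $w<s_{\tg}w$ says $w(\mathfrak a)$ lies on the same side of $H_{\tg}$ as $\mathfrak a$. Since $\tg$ is positive, this gives $c:=k+\<w(e),\g\>>0$. Therefore
\begin{align*}
\varphi(s_{\tg}w(e)-e)=M-c\<\g^\vee,a(\chi)\>.
\end{align*}
Combining this with Step 2 gives $\<\g^\vee,a(\chi)\>\ge 0$. That is, either $a^{-1}(\g)\in\Phi_I$, which is what we want, or $a^{-1}(\g)$ is a positive root outside $\Phi_I$.

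\textbf{Main obstacle: excluding $a^{-1}(\g)\in\Phi^+-\Phi_I$.} A single functional only bounds one side, so this case needs a second, genuinely different piece of information. I would first try to exploit the freedom in $e$ and in $z=au$, $u\in W_I$. The identity $\varphi(w(e)-e)=M$ holds for every $e\in\overline{\mathfrak a}$. So the linear part $z_w$ of $w$ satisfies $\varphi(z_w(e)-e)=\text{const}$ on $\mathfrak a$, i.e.\ $z_w^{-1}a(\chi)=a(\chi)$ and $z_w\in aW_Ia^{-1}$.

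I would then rerun the argument of Lemma \ref{lem:def2}. Take a chain of reflections realizing $s_{\tg}w\le t^{\mu'}$ for some $\mu'$, then extend it by $s_{\tg}$ down to $w$. This gives a chain from $t^{\mu'}(\mathfrak a)$ to $w(\mathfrak a)$ in which every step moves in a fixed $au(\Phi^{\vee,-})$-direction, by Theorem \ref{thm:HH} applied with $z=au$. Projecting via $\varphi$, the total displacement is $\varphi(t^{\mu'}(e)-w(e))=\varphi(\mu')-M\le 0$. On the other hand, each step contributes with a sign forced by the direction, and these two facts should pin every coroot in the chain, in particular $\g^\vee$, to $a(\BR\Phi_I^\vee)$. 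If this sign bookkeeping fails, my fallback is to apply the inversion symmetry $\Adm(\mu)^{-1}=\Adm(-w_0\mu)$. Under it, left multiplication by $s_{\tg}$ becomes right multiplication, and that form is better suited to the same functional argument with the roles of the sides reversed.
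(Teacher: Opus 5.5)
\textbf{Verdict: genuine gap.} Your Steps 1--3 are correct. Pairing with $a(\chi)$ amounts to Lemma \ref{lem:HH}(1) with $z=a$. The inequality $\<\g^\vee,a(\chi)\>\ge 0$ you obtain is exactly the half of the paper's argument that excludes $a^{-1}(\g)\in\Phi^- - \Phi_I$.

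The case $a^{-1}(\g)\in\Phi^+ - \Phi_I$, however, is left open, and none of your proposed routes closes it.
\begin{itemize}
\item \emph{The chain argument assumes the conclusion.} A Bruhat chain from $s_{\tg}w$ up to $t^{\mu'}$ is not a chain in a fixed chamber direction. Theorem \ref{thm:HH} only gives \emph{some} $au(\Phi^-)$-directed chain ending at $w(\mathfrak a)$, not one passing through $s_{\tg}w(\mathfrak a)$. The appended step $s_{\tg}w(\mathfrak a)\to w(\mathfrak a)$ translates by $+c\,\g^\vee$, and its $\varphi$-value $c\<\g^\vee,a(\chi)\>$ is strictly positive precisely in the case you want to exclude. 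So calling the concatenated chain $au(\Phi^{\vee,-})$-directed is circular.
\item \emph{A correctly directed chain gives nothing new.} If you use only a directed chain from $t^{au(\mu)}(\mathfrak a)$ to $s_{\tg}w(\mathfrak a)$, projecting by $\varphi$ returns the inequality of Step 3 and nothing more.
\item \emph{The other two ideas are vacuous.} The observation $z_w\in aW_Ia^{-1}$ is already built into $w\in\tW_{a,I}$. The inversion fallback merely rewrites $w^{-1}s_{\tg}=s_{w^{-1}(\tg)}w^{-1}$, which is the same problem for another affine root.
\end{itemize}

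The missing idea is to test against a chamber \emph{outside} $aW_I$, namely $z=s_\g a$. You also need the positivity $k+\<e,\g\>>0$, which comes from $\tg\in\tilde\Phi^+$ and $e\in\mathfrak a$; you only used the other positivity, $c=k+\<w(e),\g\>>0$.
\begin{itemize}
\item A direct computation gives $s_\g\bigl(s_{\tg}w(e)-e\bigr)=a(\mu-\l)+(k+\<e,\g\>)\g^\vee$.
\item Lemma \ref{lem:HH}(1) with $z=s_\g a$ gives $s_\g\bigl(s_{\tg}w(e)-e\bigr)=a(\mu-\l'')$ with $\l''\in\BR_{\ge0}\Phi^{\vee,+}$.
\item Applying your $\varphi$ to both yields $M+(k+\<e,\g\>)\<\g^\vee,a(\chi)\>\le M$, so $\<\g^\vee,a(\chi)\>\le 0$.
\item Together with Step 3 this forces $\<a^{-1}(\g^\vee),\chi\>=0$, i.e.\ $a^{-1}(\g)\in\Phi_I$.
\end{itemize}
This is precisely the paper's proof. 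It writes $s_{\tg}w(e)-e$ both as $a\bigl(\mu-\l-c\,a^{-1}(\g^\vee)\bigr)$ and as $s_\g a\bigl(\mu-\l+(k+\<e,\g\>)a^{-1}(\g^\vee)\bigr)$. If $a^{-1}(\g)\notin\Phi_I$, then one of $\l+c\,a^{-1}(\g^\vee)$ and $\l-(k+\<e,\g\>)a^{-1}(\g^\vee)$ has a negative coefficient on a simple coroot outside $I$, contradicting Lemma \ref{lem:HH}(1).
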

\begin{proof}
Write $\tilde\g = (\g,k)$. Take an arbitrary $e\in\mathfrak{a}$. Since $(\g,k)\in \tilde{\Phi}^+$, we have $k + \<e,\g\> > 0$. As $w \in \Adm(\mu)_{a, I} = \Adm^{\tW_{a, I}}(a(\mu))$, by Lemma \ref{lem:HH} (2), we have
\begin{align}\label{eq:w(e)}
w(e) = e + a(\mu - \l)    
\end{align}
for some $\l \in \BR_{\ge0}\Phi_{I}^{\vee,+}$. Since $w < s_{\tilde\g} w$, we have $k + \<e + a(\mu-\l), \g\> > 0$.

Using (\ref{eq:w(e)}), direct computation shows that
$$s_{\tilde\g}w(e) - e = s_\g a\left(\mu- \l + (k+\<e,\g\>) a^{-1}(\g^{\vee})\right)$$
and 
$$s_{\tilde\g}w(e) - e = a\left(\mu - \l - (k+\<e + a(\mu-\l),\g\>) a^{-1}(\g^{\vee})\right).$$

Suppose $\g \notin  \Phi_{a, I}$. Then $a^{-1}(\g^{\vee}) \notin \Phi_I^{\vee}$. Note that $\l \in \BR_{\ge0}\Phi_I^{\vee,+}$, $k+\<e,\g\>>0$ and $k+\<e + a(\mu-\l),\g\> >0 $. Then 
$\l - (k+\<e,\g\>) a^{-1}(\g^{\vee})$ or $\l + (k+\<e + a(\mu-\l),\g\> ) a^{-1}(\g^{\vee})$ does not lie in $\BR_{\ge0}\Phi^{\vee,+}$. By Lemma \ref{lem:HH} (1), we have $s_{\tilde\g} w \notin \Adm(\mu)$. A contradiction. Hence $\g \in \Phi_{a, I}$.
\end{proof}
\begin{remark}
A variant of this lemma is proved in \cite[Lemma 4.12]{GHN2} by G\"ortz-He-Nie using the technique of $(I, a)$-alcove elements.
\end{remark}

\begin{corollary}\label{cor:Bruhat-closed}
\begin{enumerate}
\item Let $I \subset \D_0$ and $a \in W^I$. Then $\Adm(\mu)_{a, I}$ is upwardly closed under Bruhat order, that is, for $w_1,w_2\in\Adm(\mu)$, if $w_1<w_2$ and $w_1\in\Adm(\mu)_{a, I}$, then $w_2 \in \Adm(\mu)_{a, I}$.
\item The restriction of the Bruhat order $\le_{a, I}$ of $\tW_{a, I}$ to $\Adm(\mu)^{\tW_{a, I}}(a(\mu)) $ coincides with the restriction of the Bruhat order $\le$ of $\tW$ to $\Adm(\mu)_{a, I}$.
\end{enumerate}
\end{corollary}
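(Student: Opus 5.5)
Both parts will be deduced from Lemma \ref{lem:key}, applied along chains of covering relations in the Bruhat order of $\tW$.

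\textbf{Part (1).} Let $w_1 < w_2$ with $w_1 \in \Adm(\mu)_{a, I}$ and $w_2 \in \Adm(\mu)$.
\begin{itemize}
\item By the chain property of the Bruhat order on the Coxeter group $W_{\aff}$ (extended to $\tW$ coset by coset), choose a maximal chain
\[
w_1 = x_0 < x_1 < \cdots < x_m = w_2
\]
in which each step is a cover of the form $x_{j+1} = s_{\tg_j} x_j$ with $\tg_j \in \tilde\Phi^+$.
\item Every $x_j$ satisfies $x_j \le w_2 \le t^{\mu'}$ for some $\mu'\in W_0(\mu)$, so each $x_j$ lies in $\Adm(\mu)$.
\item Induct on $j$. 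Suppose $x_j \in \Adm(\mu)_{a, I}$. Since $x_j < s_{\tg_j}x_j = x_{j+1} \in \Adm(\mu)$, Lemma \ref{lem:key} gives $\tg_j \in \Phi_{a, I}\times \BZ$, so $s_{\tg_j} \in W_{a, I, \aff}$.
\item Hence $x_{j+1} \in W_{a, I, \aff}\, x_j \subset W_{a, I, \aff}\, t^{a(\mu)}$.
\item Combining this with $x_{j+1}\in\Adm(\mu)$, Lemma \ref{lem:def2} gives $x_{j+1} \in \Adm(\mu)_{a, I}$.
\end{itemize}
At $j=m$ this proves $w_2 \in \Adm(\mu)_{a,I}$.

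\textbf{Part (2).} One direction is Dyer's result \cite[Corollary 3.4]{Dyer90}: $w \le_{a, I} w'$ implies $w \le w'$. For the converse, let $w \le w'$ with $w, w' \in \Adm(\mu)_{a, I}$.
\begin{itemize}
\item Take the same kind of chain from $w$ to $w'$. The argument of Part (1) shows that every reflection $s_{\tg_j}$ in the chain lies in $W_{a, I, \aff}$, i.e. $\tg_j \in \Phi_{a,I}\times\BZ$.
\item It remains to show each step $x_j < s_{\tg_j}x_j$ is also an increase for $\le_{a, I}$. Use the alcove description: $x < s_{\tg} x$ means $x(\mathfrak a)$ and $\mathfrak a$ lie on the same side of $H_{\tg}$.
\item The hyperplane $H_{\tg}$ is also a wall of the arrangement attached to $\Phi_{a,I}\times \BZ$. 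So the same condition, read in the coarser arrangement with the alcove containing $\mathfrak a$ as base, gives $x_j <_{a,I} s_{\tg_j} x_j$. This uses the standard characterization of Bruhat order in a reflection subgroup via its canonical generators: for $t$ a reflection of the subgroup, $x<tx$ in the subgroup's order iff $x<tx$ in the ambient order.
\item Chaining these steps gives $w \le_{a, I} w'$.
\end{itemize}

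\textbf{Main obstacle.} The delicate step is this last comparison in Part (2). Bruhat order on a reflection subgroup is generated by the subgroup's own reflections, but its base alcove is the alcove for the $\Phi_{a,I}$-arrangement containing $\mathfrak{a}$. I will check that, for any reflection $t$ of $W_{a,I,\aff}$, "$\ell(tx) > \ell(x)$" and "$\ell_{a,I}(tx) > \ell_{a,I}(x)$" agree. Both reduce to which side of the same hyperplane $H_{\tg}$ contains $x(\mathfrak a)$, and this is exactly Dyer's theorem. I expect to cite \cite{Dyer90} for this directly rather than reprove it.
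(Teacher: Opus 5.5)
Your proposal is correct and takes essentially the same route as the paper. Part (1) uses a chain of reflections $s_{\tilde\gamma_j}$ with repeated application of Lemma~\ref{lem:key} and Lemma~\ref{lem:def2}, and part (2) compares each reflection step in $\le$ and $\le_{a,I}$ via Dyer. You are slightly more explicit than the paper in checking that each intermediate element lies in $\Adm(\mu)$ (hence in $\Adm(\mu)_{a,I}$) before invoking Lemma~\ref{lem:key}. Your alcove argument, that $H_{\tilde\gamma}$ cuts neither the $\Phi_{a,I}$-alcove containing $\mathfrak{a}$ nor its translate by $x$, is a valid direct substitute for the citation to Dyer.
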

\begin{proof}
\begin{enumerate}
\item Since $w_1 < w_2$, by definition of Bruhat order, there is a chain $w_1 < s_{\tilde\g_1} w_1 <s_{\tilde\g_2} s_{\tilde\g_1} w_1 <\cdots < s_{\tilde\g_r} \cdots s_{\tilde\g_1}w_1 = w_2$. Using Lemma \ref{lem:key} repeatedly, we conclude that all $\tilde\g_i \in \Phi_{a, I} \times \BZ$ and hence $w_2 \in  W_{a, I, \aff}w_1 \cap \Adm(\mu) = \Adm(\mu)_{a, I}$ by Lemma \ref{lem:def2}. 

\item Let $w,w' \in \Adm(\mu)_{a, I}$. If $w \le_{a, I} w'$, then \cite[Corollary 3.4]{Dyer90} implies $w \le w'$. 

We now prove the converse. Suppose $w < w'$. Then there is a chain $w < s_{\tilde\g_1} w <s_{\tilde\g_2} s_{\tilde\g_1} w <\cdots < s_{\tilde\g_r} \cdots s_{\tilde\g_1}w = w'$. By (1), all terms in the chain lie in $\Adm(\mu)_{a, I} = \Adm^{\tW_{a, I}}(a(\mu))$. Again by \cite[Corollary 3.4]{Dyer90}, we have $w <_{a, I} s_{\tilde\g_1} w <_{a, I} s_{\tilde\g_2} s_{\tilde\g_1} w <_{a, I} \cdots <_{a, I} s_{\tilde\g_r} \cdots s_{\tilde\g_1}w = w'$. This completes the proof.
\end{enumerate}
\end{proof}

We need one more lemma.
\begin{lemma}\label{lem:a1a2}
Let $I\subsetneq\D_0$. Let $\a_1,\a_2\in\Phi^+-\Phi_I^+$. Suppose $s_{\a_1}s_{\a_2} \in W_I$. Then $\a_1 = \a_2$. 
\end{lemma}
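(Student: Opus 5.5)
The idea is to test the relation $s_{\a_1}s_{\a_2}\in W_I$ against a single well-chosen vector $v_0\in V$. This vector should be fixed by every element of $W_I$ and pair strictly positively with every positive root outside $\Phi_I$. The hypothesis $I\subsetneq\D_0$ guarantees that such positivity is possible.

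\emph{Choice of $v_0$.} The simple roots $\D_0$ are linearly independent in $X^*\otimes_{\BZ}\BR$, which is the dual of $V$. Hence we can choose $v_0\in V$ with
\begin{align*}
\<v_0,\a\>=0 \text{ for } \a\in I, \qquad \<v_0,\a\>=1 \text{ for } \a\in\D_0- I.
\end{align*}
Then $s_\a(v_0)=v_0-\<v_0,\a\>\a^{\vee}=v_0$ for every $\a\in I$, so $u(v_0)=v_0$ for all $u\in W_I$. This is only the trivial direction, so no Steinberg-type stabilizer theorem is needed.

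\emph{Positivity of the pairings.} Any $\b\in\Phi^+$ is a nonnegative integer combination of simple roots. If $\b\notin\Phi_I^+$, then some simple root in $\D_0- I$ occurs in $\b$ with positive coefficient; otherwise $\b$ would lie in $\Phi\cap\BZ I=\Phi_I$. Consequently $\<v_0,\b\>$ equals the sum of the coefficients of $\b$ on $\D_0- I$, which is strictly positive. In particular $c_i:=\<v_0,\a_i\>>0$ for $i=1,2$.

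\emph{Conclusion.} Since $s_{\a_1}s_{\a_2}\in W_I$, we have $s_{\a_1}s_{\a_2}(v_0)=v_0$. Applying $s_{\a_1}$ to both sides gives $s_{\a_2}(v_0)=s_{\a_1}(v_0)$, that is,
\begin{align*}
v_0-c_2\,\a_2^{\vee}=v_0-c_1\,\a_1^{\vee}.
\end{align*}
Thus $c_1\a_1^{\vee}=c_2\a_2^{\vee}$ with $c_1,c_2>0$, so $\a_1^{\vee}$ and $\a_2^{\vee}$ are positively proportional coroots. The coroot system $\Phi^{\vee}$ is reduced because $\Phi$ is, so $\a_1^{\vee}=\a_2^{\vee}$ and hence $\a_1=\a_2$.

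The only point needing care is the positivity step, which uses $\a_i\notin\Phi_I$ and $\a_i\in\Phi^+$ together. The rest is a direct computation.
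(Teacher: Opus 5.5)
Your proof is correct and uses the same idea as the paper: evaluate $s_{\a_1}s_{\a_2}\in W_I$ on a $W_I$-fixed coweight that detects roots outside $\Phi_I$. The paper uses a single fundamental coweight $\omega_i^{\vee}$ with $i\in\supp(s_{\a_1})-I$, which requires a support/Bruhat-order step and a case split on linear independence of $\a_1,\a_2$. Your $v_0$ is positive on all of $\Phi^+-\Phi_I^+$, and the rewriting $s_{\a_2}(v_0)=s_{\a_1}(v_0)$ gives positive proportionality of $\a_1^{\vee},\a_2^{\vee}$ directly, so your execution is slightly cleaner.
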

\begin{proof} For $z \in W_0$, denote by $\supp(z)$ the support of $z$, that is, the set of $\a\in\D_0$ such that $s_\a$ lies in some (or any) reduced expression of $z$. 

Set $u = s_{\a_1}s_{\a_2} \in W_I$. Since $\a_2\in \Phi^+-\Phi_I^+$,  $s_{\a_1}= u s_{\a_2} > u = s_{\a_1} s_{\a_2}$. Hence, we have $\supp(s_{\a_1}) \supsetneq \supp(s_{\a_1} s_{\a_2}) = I$. Let $i \in \supp(s_{\a_1}) - I$. Let $\omega_i^{\vee}$ be the fundamental coweight corresponding to $i$. Since $u \in W_I$, $u(\omega_i^{\vee})=\omega_i^{\vee}$. On the other hand, we have $u(\omega_i^{\vee}) = s_{\a_1} s_{\a_2} (\omega_i^{\vee}) =\omega_i^{\vee} -\<\omega_i^{\vee},\a_1\>\a_1^{\vee} -  \<\omega_i^{\vee},\a_2\>\a_2^{\vee}+ \<\omega_i^{\vee},\a_2\>\<\a_2^{\vee},\a_1\>\a_1^{\vee} $. If $\a_1$ and $\a_2$ are linearly independent, we must have $\<\omega_i^{\vee},\a_1\> = \<\omega_i^{\vee},\a_2\>  = 0$. This contradicts the assumption $i\in \supp(s_{\a_1})$. Hence, $\a_1=\a_2$.
\end{proof}

We now start to prove Theorem \ref{thm:main1}.

\begin{proof}[Proof of Theorem \ref{thm:main1}]Let $I\subset \D_0$ and $a \in W^I$. 

We first prove that $\Adm(\mu)_{a, I} \subset \{w \in \Adm(\mu) \mid \L(w) \subset a W_I(\mu)\}$. Let $w \in \Adm(\mu)_{a, I}$. Suppose $w < t^{\mu'}$ for some $\mu' \in W_0(\mu)$. By Corollary \ref{cor:Bruhat-closed} (1), we get $t^{\mu'} \in \Adm(\mu)_{a, I}$. Then $\mu' \in aW_I(\mu)$.

Next, we prove that $\{w \in \Adm(\mu) \mid \L(w) \subset a W_I(\mu)\} \subset \Adm(\mu)_{a, I}$. Let $w \in \Adm(\mu) - \Adm(\mu)_{a, I}$. It suffices to prove that $w \le t^{\mu'}$ for some $\mu' \notin aW_I(\mu)$. Apply downward induction to $\ell(w)$. If $\ell(w) = \<\mu,2\rho\>$, the statement is clear. Now suppose $\ell(w) < \<\mu,2\rho\>$. 

If there exists $w' \in \Adm(\mu)- \Adm(\mu)_{a, I}$ with $w' > w$, then the statement follows from applying the induction hypothesis to $w'$. Now suppose for all $w' \in \Adm(\mu)$ such that $w' > w$, we have $w' \in \Adm(\mu)_{a, I}$. By \cite[Proposition 3.14 (2)]{HSY26} or \cite[Proposition 8.7]{Ha05}, there are $\tilde\g_1 = (\g_1,k_1), \tilde\g_2 = (\g_2, k_2) \in \tilde\Phi^+$ with $\tg_1\ne \tg_2$, $w < s_{\tilde\g_1}w$, $w < s_{\tilde\g_2}w$ and $s_{\tilde\g_1}w,s_{\tilde\g_2}w \in \Adm(\mu)_{a, I}$. 

Since $w \in \Adm(\mu) - \Adm(\mu)_{a, I}$ and $s_{\tilde\g_1}w, s_{\tilde\g_2}w \in \Adm(\mu)_{a, I}$, by Lemma \ref{lem:def2}, we have $\g_1,\g_2 \notin \Phi_{a, I}$ and $s_{\tg_1}s_{\tg_2} \in W_{a, I, \aff}$. By Lemma \ref{lem:a1a2}, we have $\g_1 = c \g_2$ with $c = \pm1$. Note that $s_{\tg_1}s_{\tg_2} = s_{\g_1}s_{\g_2}t^{k_1s_{\g_2}(\g_1^{\vee}) + k_2 \g_2^{\vee}} = t^{k_1s_{\g_2}(\g_1^{\vee}) + k_2 \g_2^{\vee}}=t^{(- k_1 c + k_2) \g_2^{\vee}} $. It follows that $(- k_1 c + k_2) \g_2^{\vee} \in \Phi_{a, I}^{\vee}$. But $\g_2\notin \Phi_{a, I}$. Hence $k_2 = k_1 c$ and $\tg_1 = c\tg_2$. This contradicts the assumption that $\tg_1,\tg_2 \in \tilde\Phi^+$ and $\tg_1 \ne \tg_2$. This completes the proof.
\end{proof}
%$\tilde \g \in \tilde\Phi^+$ with $w < s_{\tilde \g} w$ and $s_{\tilde \g} w \in \Adm(\mu)- \Adm(\mu)_{a, I}$, then the statement follows from the induction hypothesis. Now suppose that for all $\tilde\g\in \tilde\Phi^+$ such that $w < s_{\tilde\g}w$ and $s_{\tilde\g}w \in\Adm(\mu)$, we have $\Adm(\mu)_{a, I}$. 

\printbibliography

@book {SW20,
    AUTHOR = {Scholze, Peter and Weinstein, Jared},
     TITLE = {Berkeley lectures on {$p$}-adic geometry},
    SERIES = {Annals of Mathematics Studies},
    VOLUME = {207},
 PUBLISHER = {Princeton University Press, Princeton, NJ},
      YEAR = {2020},
     PAGES = {x+250},
      ISBN = {978-0-691-20209-9; 978-0-691-20208-2; 978-0-691-20215-0},
   MRCLASS = {14G45 (14A15 14F30 14G22 14G35 14M15)},
  MRNUMBER = {4446467},
MRREVIEWER = {Lance\ Edward\ Miller},
}

@article{Ha01,
title = {{Test functions for Shimura varieties: the Drinfeld case.}},
author = {Haines, Thomas J.},
year = {2001},
date = {2001},
journal = {{Duke Mathematical Journal}},
volume = {106},
pages = {19--40},
}

@misc{HSY26,
      title={Cohen-Macaulayness of Local Models via Shellability of the Admissible Set}, 
      author={Xuhua He and Felix Schremmer and Qingchao Yu},
      year={2026},
      eprint={2603.05875},
      archivePrefix={arXiv},
      primaryClass={math.NT},
      url={https://arxiv.org/abs/2603.05875}, 
}

@incollection {Ha05,
    AUTHOR = {Haines, Thomas J.},
     TITLE = {Introduction to {S}himura varieties with bad reduction of
              parahoric type},
 BOOKTITLE = {Harmonic analysis, the trace formula, and {S}himura varieties},
    SERIES = {Clay Math. Proc.},
    VOLUME = {4},
     PAGES = {583--642},
 PUBLISHER = {Amer. Math. Soc., Providence, RI},
      YEAR = {2005},
      ISBN = {0-8218-3844-X},
   MRCLASS = {11G18 (14G35)},
  MRNUMBER = {2192017},
MRREVIEWER = {Ulrich\ G\"ortz},
}

@misc{HY25,
      title={Dual Shellability of Admissible Set and Cohen-Macaulayness of Local Models}, 
      author={Xuhua He and Qingchao Yu},
      year={2025},
      eprint={2509.11581},
      archivePrefix={arXiv},
      primaryClass={math.AG},
      url={https://arxiv.org/abs/2509.11581}, 
}

@article {HH17,
    AUTHOR = {Haines, Thomas J. and He, Xuhua},
     TITLE = {Vertexwise criteria for admissibility of alcoves},
   JOURNAL = {Amer. J. Math.},
  FJOURNAL = {American Journal of Mathematics},
    VOLUME = {139},
      YEAR = {2017},
    NUMBER = {3},
     PAGES = {769--784},
      ISSN = {0002-9327,1080-6377},
   MRCLASS = {11G18 (14G35 20E42 20F55)},
  MRNUMBER = {3650232},
       DOI = {10.1353/ajm.2017.0020},
       URL = {https://doi.org/10.1353/ajm.2017.0020},
}

@article {PZ13,
    AUTHOR = {Pappas, G. and Zhu, X.},
     TITLE = {Local models of {S}himura varieties and a conjecture of
              {K}ottwitz},
   JOURNAL = {Invent. Math.},
  FJOURNAL = {Inventiones Mathematicae},
    VOLUME = {194},
      YEAR = {2013},
    NUMBER = {1},
     PAGES = {147--254},
      ISSN = {0020-9910,1432-1297},
   MRCLASS = {14G35 (11G18 14M15)},
  MRNUMBER = {3103258},
MRREVIEWER = {Conjeeveram\ S.\ Rajan},
       DOI = {10.1007/s00222-012-0442-z},
       URL = {https://doi.org/10.1007/s00222-012-0442-z},
}

@article {HR23,
    AUTHOR = {Haines, Thomas J. and Richarz, Timo},
     TITLE = {Normality and {C}ohen-{M}acaulayness of parahoric local
              models},
   JOURNAL = {J. Eur. Math. Soc. (JEMS)},
  FJOURNAL = {Journal of the European Mathematical Society (JEMS)},
    VOLUME = {25},
      YEAR = {2023},
    NUMBER = {2},
     PAGES = {703--729},
      ISSN = {1435-9855,1435-9863},
   MRCLASS = {14G35 (11G18)},
  MRNUMBER = {4556794},
       DOI = {10.4171/jems/1192},
       URL = {https://doi.org/10.4171/jems/1192},
}

@misc{AGLR,
      title={On the p-adic theory of local models}, 
      author={Ansch\"utz, Johannes and Gleason, Ian and Louren\c co, Jo\~ao  and Richarz, Timo},
      year={2022},
      eprint={2201.01234},
      archivePrefix={arXiv},
      primaryClass={math.AG},
      url={https://arxiv.org/abs/2201.01234}, 
}

@incollection {PRS,
    AUTHOR = {Pappas, Georgios and Rapoport, Michael and Smithling, Brian},
     TITLE = {Local models of {S}himura varieties, {I}. {G}eometry and
              combinatorics},
 BOOKTITLE = {Handbook of moduli. {V}ol. {III}},
    SERIES = {Adv. Lect. Math. (ALM)},
    VOLUME = {26},
     PAGES = {135--217},
 PUBLISHER = {Int. Press, Somerville, MA},
      YEAR = {2013},
      ISBN = {978-1-57146-259-6},
   MRCLASS = {14G35 (11G18 14M15)},
  MRNUMBER = {3135437},
MRREVIEWER = {Peter\ Bruin},
}

@article {Sch24,
    AUTHOR = {Schremmer, Felix},
     TITLE = {Affine {B}ruhat order and {D}emazure products},
   JOURNAL = {Forum Math. Sigma},
  FJOURNAL = {Forum of Mathematics. Sigma},
    VOLUME = {12},
      YEAR = {2024},
     PAGES = {Paper No. e53, 56},
      ISSN = {2050-5094},
   MRCLASS = {20F55 (11G25 14L05 20C08)},
  MRNUMBER = {4732064},
       DOI = {10.1017/fms.2024.33},
       URL = {https://doi.org/10.1017/fms.2024.33},
}

@article {GHN2,
    AUTHOR = {G\"{o}rtz, Ulrich and He, Xuhua and Nie, Sian},
     TITLE = {Fully {H}odge-{N}ewton decomposable {S}himura varieties},
   JOURNAL = {Peking Math. J.},
  FJOURNAL = {Peking Mathematical Journal},
    VOLUME = {2},
      YEAR = {2019},
    NUMBER = {2},
     PAGES = {99--154},
      ISSN = {2096-6075,2524-7182},
   MRCLASS = {11G18 (14G35 20G25)},
  MRNUMBER = {4060001},
MRREVIEWER = {Jinbo\ Ren},
       DOI = {10.1007/s42543-019-00013-2},
       URL = {https://doi.org/10.1007/s42543-019-00013-2},
}

@article {MV20,
    AUTHOR = {Mili\'{c}evi\'{c}, Elizabeth and Viehmann, Eva},
     TITLE = {Generic {N}ewton points and the {N}ewton poset in
              {I}wahori-double cosets},
   JOURNAL = {Forum Math. Sigma},
  FJOURNAL = {Forum of Mathematics. Sigma},
    VOLUME = {8},
      YEAR = {2020},
     PAGES = {Paper No. e50, 18},
      ISSN = {2050-5094},
   MRCLASS = {20G25 (11G25 14L30 20F55)},
  MRNUMBER = {4176754},
MRREVIEWER = {Zhe\ Chen},
       DOI = {10.1017/fms.2020.46},
       URL = {https://doi.org/10.1017/fms.2020.46},
}

@article {HY,
    AUTHOR = {He, Xuhua and Yu, Qingchao},
     TITLE = {Dimension formula for the affine {D}eligne-{L}usztig variety
              {$X(\mu, b)$}},
   JOURNAL = {Math. Ann.},
  FJOURNAL = {Mathematische Annalen},
    VOLUME = {379},
      YEAR = {2021},
    NUMBER = {3-4},
     PAGES = {1747--1765},
      ISSN = {0025-5831,1432-1807},
   MRCLASS = {20G25 (11G25 14G35 20F55 20G05)},
  MRNUMBER = {4238279},
MRREVIEWER = {Shuichiro\ Takeda},
       DOI = {10.1007/s00208-020-02102-5},
       URL = {https://doi.org/10.1007/s00208-020-02102-5},
}

@misc{kh13,
      title={Weights of simple highest weight modules over a complex semisimple Lie algebra}, 
      author={Apoorva Khare},
      year={2013},
      eprint={1305.4104},
      archivePrefix={arXiv},
      primaryClass={math.RT},
      url={https://arxiv.org/abs/1305.4104}, 
}

@article {KhRi12,
    AUTHOR = {Khare, Apoorva and Ridenour, Tim},
     TITLE = {Faces of weight polytopes and a generalization of a theorem of
              {V}inberg},
   JOURNAL = {Algebr. Represent. Theory},
  FJOURNAL = {Algebras and Representation Theory},
    VOLUME = {15},
      YEAR = {2012},
    NUMBER = {3},
     PAGES = {593--611},
      ISSN = {1386-923X,1572-9079},
   MRCLASS = {17B20 (17B10)},
  MRNUMBER = {2912474},
MRREVIEWER = {Anne\ Moreau},
       DOI = {10.1007/s10468-010-9261-3},
       URL = {https://doi.org/10.1007/s10468-010-9261-3},
}

@article{Vin91,
doi = {10.1070/IM1991v036n01ABEH001925},
url = {https://doi.org/10.1070/IM1991v036n01ABEH001925},
year = {1991},
month = {feb},
publisher = {},
volume = {36},
number = {1},
pages = {1},
author = {È B Vinberg},
title = {On certain commutative subalgebras of a universal enveloping algebra},
journal = {Mathematics of the USSR-Izvestiya},
}

@article {Ren09,
    AUTHOR = {Renner, Lex E.},
     TITLE = {Descent systems for {B}ruhat posets},
   JOURNAL = {J. Algebraic Combin.},
  FJOURNAL = {Journal of Algebraic Combinatorics. An International Journal},
    VOLUME = {29},
      YEAR = {2009},
    NUMBER = {4},
     PAGES = {413--435},
      ISSN = {0925-9899,1572-9192},
   MRCLASS = {20F55 (05E15 14M99 20M32)},
  MRNUMBER = {2506714},
MRREVIEWER = {Barbara\ Baumeister},
       DOI = {10.1007/s10801-008-0141-4},
       URL = {https://doi.org/10.1007/s10801-008-0141-4},
}

@article{Dyer90,
title = {Reflection subgroups of Coxeter systems},
journal = {Journal of Algebra},
volume = {135},
number = {1},
pages = {57-73},
year = {1990},
issn = {0021-8693},
doi = {https://doi.org/10.1016/0021-8693(90)90149-I},
url = {https://www.sciencedirect.com/science/article/pii/002186939090149I},
author = {Matthew Dyer}
}

@article{MAX89,
title = {Wythoff's construction for Coxeter groups},
journal = {Journal of Algebra},
volume = {123},
number = {2},
pages = {351-377},
year = {1989},
issn = {0021-8693},
doi = {https://doi.org/10.1016/0021-8693(89)90051-3},
url = {https://www.sciencedirect.com/science/article/pii/0021869389900513},
author = {George Maxwell}
}

@misc{PR26,
      title={Toric schemes and integral models for Shimura varieties with $\Gamma_1(p)$-type level}, 
      author={Georgios Pappas and Michael Rapoport},
      year={2026},
      eprint={2602.23245},
      archivePrefix={arXiv},
      primaryClass={math.AG},
      url={https://arxiv.org/abs/2602.23245}, 
}
\end{document}